\newtheorem{thm}{Theorem}[section]
\newtheorem{dfn}[thm]{Definition}
\newtheorem{lem}[thm]{Lemma}
\newtheorem{prp}[thm]{Proposition}
\newtheorem{rem}[thm]{Remark}
\newtheorem{cor}[thm]{Corollary}
\def\P{\mathbb{P}}
\def\G{\mathbb{G}}
\def\R{\mathbb{R}}
\def\Z{\mathbb{Z}}
\def\O{\mathcal{O}}
\def\I{\mathcal{I}}
\def\Q{\mathbb{Q}}
\def\Spec{\mathop{\mathrm{Spec}}\nolimits}
\def\Hom{\mathop{\mathrm{Hom}}\nolimits}
\def\val{\mathop{\mathrm{val}}\nolimits}
\def\id{\mathop{\mathrm{id}}\nolimits}
\def\Trop{\mathop{\mathrm{Trop}}\nolimits}
\def\ini{\mathop{\mathrm{in}}\nolimits}
\def\alg{\mathop{\mathrm{alg}}\nolimits}
\def\Gr{\mathop{\mathrm{Gr}}\nolimits}
\newcommand{\relmiddle}[1]{\mathrel{}\middle#1\mathrel{}}
\numberwithin{equation}{section}
\numberwithin{figure}{section}
\DeclareRobustCommand{\genericinterval}[2]{%
 \@ifstar{\genericinterval@star{#1}{#2}}{\genericinterval@nostar{#1}{#2}}}
\newcommand{\genericinterval@star}[4]{\mathopen{}\mathclose{\left#1#3,#4\right#2}}
\newcommand{\genericinterval@nostar}[4]{\mathopen{#1}#3,#4\mathclose{#2}}
\begin{document}
\title[A tropical characterization of non-archimedean algebraic varieties]{A Tropical characterization of algebraic subvarieties of   toric varieties over non-archimedean fields}
\author{Ryota Mikami}
\address{Department of Mathematics, Faculty of Science, Kyoto University, Kyoto 606-8502, Japan}
\email{ryo-mkm@math.kyoto-u.ac.jp}
\subjclass[2010]{Primary 14T05; Secondary 14G22; Tertiary 14M25}
\keywords{tropical geometry, rigid analytic geometry}
\date{\today}
\maketitle

\begin{abstract} 
		We study  the tropicalizations of analytic subvarieties of normal toric varieties over complete non-archimedean valuation fields.
		We show  that a Zariski closed analytic subvariety of a normal toric variety is algebraic  if its  tropicalization   is a finite union of polyhedra.
		Previously, the converse direction was known by the theorem of Bieri and Groves.
		Over the field of complex numbers, Madani,  L.¥ Nisse, and  M.¥ Nisse  proved  similar results for  analytic subvarieties of tori.		
\end{abstract}

\section{Introduction}
We study  the tropicalizations of analytic subvarieties of normal toric varieties over complete non-archimedean valuation fields.
We shall give a characterization of  algebraic subvarieties of  normal toric varieties  in terms of  their tropicalizations.

First, we recall the  definition of  the \textit{tropicalization} of Zariski closed analytic subvarieties of normal toric varieties; see \cite[Section 2 and Section 3]{Pay09-1} for details. Let $K$ be a complete non-archimedean valuation field with non-trivial absolute value $|  \cdot   |$.
Let $M $ be a free $\Z$-module of finite rank.
Let $Y_{\Sigma}$ be the normal toric variety over $K$ associated to a fan $\Sigma $ in $N_\R:=\Hom_\Z( M,\R)$.
For each cone $\sigma \in \Sigma$, the torus orbit $O(\sigma)$  corresponding to $\sigma$ is isomorphic to the torus $\Spec(K[\sigma^{\perp} \cap M]).$
The \textit{tropicalization map} $$\Trop \colon O(\sigma)^{\mathrm{an}} \rightarrow (N_\sigma )_\R:=\Hom_\Z(\sigma^{\perp} \cap M,\R)$$ 
 is the proper surjective continuous map given by 
 $$ \Trop(| \cdot |_x):= -\log |\cdot|_x  \colon \sigma^{\perp} \cap M \rightarrow  \R $$
 for $ | \cdot |_x \in O(\sigma)^{\mathrm{an}}$.
We define the tropicalization map $$\Trop \colon Y_{\Sigma}^{\mathrm{an}}=\bigsqcup_{\sigma \in \Sigma }  O(\sigma)^{\mathrm{an}} \rightarrow \bigsqcup_{\sigma \in \Sigma } (N_\sigma )_\R$$
by gluing the tropicalization maps $\Trop \colon O(\sigma)^{\mathrm{an}} \rightarrow (N_\sigma )_\R$ together; see Section 2 for details.
Here, for an algebraic variety $Z$ over  $K$, we denote by $Z^{\mathrm{an}}$ the \textit{Berkovich analytic space} associated to $Z$; see \cite[Theorem 3.4.1]{Ber90}.
For an  irreducible  Zariski closed analytic subvariety $X \subset Y_{\Sigma}^{\mathrm{an}}$, the image $\Trop(X)$ of $X$  is called the \textit{tropicalization} of $X$.

 Gubler showed that for any cone $\sigma \in \Sigma $, the intersection $\Trop(X) \cap (N_\sigma )_\R$ is a \textit{locally finite} union of polyhedra \cite[Theorem 1.1]{Gub07}.
 Here, we identify $(N_\sigma )_\R$ with $\R^{ \dim O(\sigma)}$ by taking a $\Z $-basis of $\Hom_\Z(\sigma^{\perp} \cap M,\Z)$.
On the other hand, if $X \subset Y_{\Sigma}^{\mathrm{an}}$ is  the analytification of a Zariski closed algebraic subvariety of $Y_{\Sigma}$, Bieri and Groves showed that the intersection $\Trop(X) \cap (N_\sigma )_\R$  is  a \textit{finite} union of polyhedra for any cone $\sigma \in \Sigma $ \cite[Theorem A]{BG84}. (See also \cite[Theorem 2.2.3]{EKL06}.)

 For   an irreducible Zariski closed analytic subvariety $X \subset Y_{\Sigma}^{\mathrm{an}}$,
 there is a unique cone $\sigma_X \in \Sigma $  such that $X \cap O (\sigma_X)^{\mathrm{an}}$ is a dense Zariski open analytic subvariety of $X$.

The main theorem of this paper is as follows: 

\begin{thm}\label{main}
	Let $Y_{\Sigma}$, $X \subset Y_{\Sigma}^{\mathrm{an}}$, and  $\sigma_X \in \Sigma $ be as above.
Assume that $\Trop(X ) \cap (N_{\sigma_X} )_\R$ is a finite union of polyhedra.
Then $X$ is  the analytification of a Zariski closed algebraic subvariety  of $Y_{\Sigma}$.
	\end{thm}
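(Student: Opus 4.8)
The plan is to reduce to the case of a torus and then to compactify $X$ along the recession directions of its tropicalization, so that the finiteness hypothesis is converted into \emph{properness} of a closure, to which GAGA for proper schemes applies. The torus is not proper, which is why a compactification step is unavoidable; over $\C$ the analogous obstruction is handled by a growth (Liouville-type) argument, and here its role is played by non-archimedean GAGA.

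First I would reduce to the torus. Since $X \cap O(\sigma_X)^{\mathrm{an}}$ is Zariski open and dense in the irreducible $X$ and is a Zariski closed analytic subvariety of the torus $T := O(\sigma_X) \cong \Spec(K[\sigma_X^{\perp} \cap M])$, with tropicalization $\Trop(X)\cap (N_{\sigma_X})_\R$ a finite union of polyhedra by hypothesis, it suffices to treat an irreducible Zariski closed analytic subvariety $X_0 \subset T^{\mathrm{an}}$ of a torus whose tropicalization is a finite union of polyhedra. Indeed, once $X_0$ is shown to be the analytification of an algebraic subvariety $V \subset T$, the Zariski closure $\overline{V}$ of $V$ in $Y_{\Sigma}$ is algebraic, and comparing $X$ with $\overline{V}^{\mathrm{an}}$ on the dense open $O(\sigma_X)^{\mathrm{an}}$ forces $X = \overline{V}^{\mathrm{an}}$.

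For the torus case, write $\Trop(X_0) \subset N'_\R$ with $N' := \Hom(\sigma_X^{\perp} \cap M,\Z)$. Because $\Trop(X_0)$ is a \emph{finite} union of polyhedra, its recession cones form a finite fan; I would refine and complete this fan to a complete fan $\Sigma'$ in $N'_\R$, giving a proper toric variety $Y_{\Sigma'}$ containing $T$ as its dense torus. The finiteness hypothesis enters precisely here: it is exactly what makes the set of recession directions of $X_0$ finite, so that a single \emph{complete} (finite) fan can absorb all the ends of $X_0$; if $\Trop(X_0)$ were merely locally finite this would fail, reflecting genuine non-algebraicity. I would then form the topological closure $\overline{X_0}$ of $X_0$ in the compact space $Y_{\Sigma'}^{\mathrm{an}}$ and aim to show that $\overline{X_0}$ is a Zariski closed analytic subvariety. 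Granting this, $\overline{X_0}$ is automatically proper (a closed subspace of the compact $Y_{\Sigma'}^{\mathrm{an}}$, as $Y_{\Sigma'}$ is complete), so by GAGA for proper schemes over $K$ it is the analytification of a Zariski closed subscheme $V' \subset Y_{\Sigma'}$. Setting $V := V' \cap T$ gives $X_0 = \overline{X_0} \cap T^{\mathrm{an}} = (V'\cap T)^{\mathrm{an}} = V^{\mathrm{an}}$, since $X_0$ is already closed in the open $T^{\mathrm{an}}$.

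The main obstacle is the step just granted: that the topological closure $\overline{X_0}$ is genuinely a Zariski closed \emph{analytic} subvariety of $Y_{\Sigma'}^{\mathrm{an}}$, not merely a closed subset. I would attack this stratum by stratum along the boundary. For a cone $\tau \in \Sigma'$, the closure should meet the orbit $O(\tau)^{\mathrm{an}}$ only when $\tau$ lies in a recession cone of $\Trop(X_0)$ — finitely many cones — and on such an orbit $\overline{X_0}\cap O(\tau)^{\mathrm{an}}$ should again be an analytic subvariety of the smaller torus $O(\tau)$ whose tropicalization is the corresponding recession slice of $\Trop(X_0)$, itself a finite union of polyhedra. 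This suggests an induction on the codimension of the boundary strata, where at each stratum one must exhibit, in a neighbourhood of every boundary point, finitely many \emph{analytic} equations cutting out $\overline{X_0}$, using the finite polyhedral structure of $\Trop(X_0)$ near the relevant recession cone to control the convergence of these local Laurent expansions and to patch finitely many affinoid charts. Producing these local analytic models and verifying their compatibility is where the real work lies; the finiteness of $\Trop(X_0)$ must be invoked at each stratum to keep the number of boundary contributions and affinoid charts finite and the convergence under control.
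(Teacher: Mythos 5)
Your reduction to the dense torus orbit matches the paper's Lemma \ref{orbit}, but from there you diverge: the paper follows Payne's projection method (generic surjections $\G_m^n \to \G_m^{d+1}$ reducing everything to the hypersurface case, where an analytic function with finite polyhedral tropicalization is shown to be a Laurent polynomial), whereas you propose a tropical compactification plus non-archimedean GAGA. The problem is that the step you explicitly ``grant'' --- that the topological closure $\overline{X_0}$ in $Y_{\Sigma'}^{\mathrm{an}}$ is a Zariski closed \emph{analytic} subvariety --- is not a technical verification but the entire content of the theorem, and your sketch offers no mechanism for it. Observe that properness of $\overline{X_0}$ holds for \emph{any} complete fan (e.g.\ the fan of $\P^n$), with no finiteness hypothesis on $\Trop(X_0)$ whatsoever; so unless the analyticity of the closure is exactly where the finiteness hypothesis is consumed, your argument would ``prove'' that every irreducible Zariski closed analytic subvariety of a torus is algebraic, which is false (the example in Section 6 of the paper is an analytic, non-algebraic hypersurface of $\G_m^2$). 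You do acknowledge that finiteness must enter at this step, but ``exhibit finitely many analytic equations \dots\ control the convergence'' restates the problem rather than solving it: extending the analytic structure of $X_0$ across the toric boundary is an extension problem of Remmert--Stein/Bishop type, and such extension theorems require the exceptional locus to have dimension strictly smaller than that of the set being extended --- which fails here, since the toric boundary is a divisor and $\dim X_0$ is in general at most $n-1$. No substitute for such a theorem is provided.

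By contrast, the paper never has to analyze closures at the toric boundary. It uses the finiteness of $\Trop(X')$ twice, both times inside the torus: first, via the Grassmannian density argument from \cite{LB15}, to produce surjections $\phi \colon \G_m^n \rightarrow \G_m^{d+1}$ whose tropicalizations are injective on all of the finitely many $d$-dimensional polyhedra of $\Trop(X')$ --- this boundedness is what makes $\phi^{\mathrm{an}}|_{X'}$ proper in Lemma \ref{zariskiclosed}, so that $\phi^{\mathrm{an}}(X')$ is a Zariski closed hypersurface with finite polyhedral tropicalization; and second, in Theorem \ref{hyper}, where finiteness of the $(r-1)$-skeleton of $\Sigma_{\Trop(f)}$ forces the defining analytic function to have only finitely many terms. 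The algebraic set $W=\bigcap_{\phi\in S} W_\phi$ then has dimension at most $d$ by Lemma \ref{dim} and contains $X'$ as an irreducible component, which gives algebraicity. If you wish to pursue your compactification route, you must first prove the non-archimedean analytic-closure statement you are implicitly assuming, and that appears to be at least as hard as the theorem itself.
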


Conversely, if $X$ is algebraic then $\Trop(X )\cap (N_\sigma )_\R$ is a finite union of polyhedra in $(N_\sigma )_\R$ for any cone $\sigma \in \Sigma$ by the theorem of Bieri-Groves \cite[Theorem A]{BG84}.
Hence, we get the following:
\begin{cor}
		Let $Y_{\Sigma}$ and $X \subset Y_{\Sigma}^{\mathrm{an}}$ be as above.
	Then  $X$ is the analytification of a Zariski closed algebraic subvariety of $Y_{\Sigma}$ if and only if $\Trop(X )\cap (N_\sigma )_\R$ is a finite union of polyhedra in $(N_\sigma )_\R$ for any cone $\sigma \in \Sigma$.
\end{cor}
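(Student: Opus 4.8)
The reverse implication of the corollary, that algebraicity forces each $\Trop(X)\cap(N_\sigma)_\R$ to be a finite union of polyhedra, is the theorem of Bieri and Groves \cite[Theorem A]{BG84} recalled above. The whole content is therefore the forward implication, i.e.\ Theorem \ref{main}, which I now sketch. I would first reduce to a torus. Since $X\cap O(\sigma_X)^{\mathrm{an}}$ is Zariski dense in $X$ and analytification is compatible with the formation of Zariski closures of closed subschemes, it is enough to prove that $X_0:=X\cap O(\sigma_X)^{\mathrm{an}}$ is the analytification of a closed subvariety of the torus $T:=O(\sigma_X)\cong\Spec(K[\sigma_X^{\perp}\cap M])$; taking closures in $Y_{\Sigma}$ then yields the algebraicity of $X$. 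So I may assume that $X_0\subset T^{\mathrm{an}}$ is an irreducible Zariski closed analytic subvariety whose tropicalization $\Trop(X_0)\subset N'_\R:=\Hom(\sigma_X^{\perp}\cap M,\R)$ is a finite union of polyhedra.

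The plan is to compactify and apply non-archimedean GAGA. Because $\Trop(X_0)$ is a \emph{finite} polyhedral complex, its recession fan is a finite fan in $N'_\R$, and I would pick a complete fan $\Sigma'$ refining it. This yields a proper toric variety $Y_{\Sigma'}\supset T$ having $T$ as its dense torus, and the compatibility of $\Sigma'$ with the recession directions of $\Trop(X_0)$ ensures that the closure of $\Trop(X_0)$ in the compactified target of the extended tropicalization map is compact --- this is the analytic analogue of Tevelev's tropical compactifications. Let $\overline{X_0}$ be the closure of $X_0$ in $Y_{\Sigma'}^{\mathrm{an}}$. Since $Y_{\Sigma'}$ is proper, $Y_{\Sigma'}^{\mathrm{an}}$ is compact and hence so is $\overline{X_0}$. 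The decisive point is then to show that $\overline{X_0}$ is an honest Zariski closed analytic subvariety of $Y_{\Sigma'}^{\mathrm{an}}$ and not merely a closed subset; granting this, the GAGA theorem (valid because $Y_{\Sigma'}$ is proper over $K$) produces a closed subscheme $Z\subset Y_{\Sigma'}$ with $Z^{\mathrm{an}}=\overline{X_0}$. Intersecting with the dense torus gives $X_0=(Z\cap T)^{\mathrm{an}}$, which is exactly the algebraicity we want.

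The main obstacle is precisely the analyticity of the closure $\overline{X_0}$ along the toric boundary, and this is where the finiteness hypothesis is indispensable: for an analytic subvariety whose tropicalization is only \emph{locally} finite the closure can accumulate at the boundary and fail to be analytic, so finiteness is what must be converted into analytic control. For each cone $\tau\in\Sigma'$ I would have to show that $\overline{X_0}\cap O(\tau)^{\mathrm{an}}$ is cut out analytically near the boundary. I would attack this with an admissible formal model of a boundary neighborhood adapted to the polyhedral decomposition of $\Trop(X_0)$: since the complex has only finitely many cells, by \cite[Theorem 1.1]{Gub07} the relevant slice $\Trop(X_0)\cap(N_\tau)_\R$ is again a finite polyhedral complex, the initial degenerations $\ini_w(X_0)$ stabilize as $w$ approaches $\tau$, and the finitely many resulting equations patch to analytic defining equations on a neighborhood in $Y_{\Sigma'}^{\mathrm{an}}$. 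Once $\overline{X_0}$ is known to be a closed analytic subvariety, compactness together with GAGA completes the proof of Theorem \ref{main}, and the corollary follows by combining it with Bieri--Groves.
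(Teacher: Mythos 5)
Your reduction of the corollary to Theorem \ref{main} plus Bieri--Groves is exactly what the paper does, and your first step (pass to $X_0=X\cap O(\sigma_X)^{\mathrm{an}}$ inside the torus and recover $X$ as its Zariski closure, via \cite[Proposition 3.4.4]{Ber90}) matches Lemma \ref{orbit}. From there, however, your proof of Theorem \ref{main} takes a different route and has a genuine gap at its central step. You propose to compactify $T$ by a complete fan $\Sigma'$ adapted to the recession of $\Trop(X_0)$, take the topological closure $\overline{X_0}$ in $Y_{\Sigma'}^{\mathrm{an}}$, show it is a Zariski closed \emph{analytic} subvariety, and conclude by GAGA. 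The analyticity of $\overline{X_0}$ along the toric boundary is precisely where all the difficulty is concentrated, and your justification --- that the initial degenerations ``stabilize as $w$ approaches $\tau$'' and that ``the finitely many resulting equations patch to analytic defining equations'' --- is an assertion, not an argument. Already for a hypersurface $V(f)\subset(\G_m^r)^{\mathrm{an}}$ with $f$ an analytic function having infinitely many terms, producing defining equations for the closure near a boundary stratum amounts to showing that $f$, after twisting by a monomial, converges on a neighborhood of that stratum; controlling this in all boundary directions is essentially the statement that $f$ has finitely many terms, i.e.\ the theorem itself. Nothing in the sketch explains why finiteness of the tropicalization forces this; the closure of a non-algebraic analytic set can fail to be analytic at boundary points, and ruling that out is the whole content. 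So the compactification-plus-GAGA strategy is not wrong in spirit, but as written it defers the entire proof to an unproved claim.

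For comparison, the paper handles the hypersurface case directly and elementarily (Theorem \ref{hyper}): if $\Trop(V(f))$ is a finite union of polyhedra, then $\Sigma_{\Trop(f)}$ has finitely many maximal cells, so a finite set $\Lambda$ of exponents achieves the minimum defining $\Trop(f)$ everywhere; if some $a_v\neq 0$ with $v$ having a coordinate exceeding in absolute value the corresponding coordinate of every $u\in\Lambda$, a point $x$ far out on a coordinate axis gives $\val(a_v)+\langle x,v\rangle<\val(a_u)+\langle x,u\rangle$ for all $u\in\Lambda$, contradicting $\bigcup_{u\in\Lambda}\sigma_u=\R^r$; hence $f$ is a Laurent polynomial. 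The general case is reduced to hypersurfaces by Payne's projection trick: generic surjections $\phi\colon\G_m^n\to\G_m^{d+1}$ injective on the $d$-dimensional polyhedra of $\Trop(X_0)$ make $\phi^{\mathrm{an}}(X_0)$ a closed analytic hypersurface with finite tropicalization, hence algebraic by the hypersurface case, and $X_0$ is recovered as an irreducible component of the algebraic set $\bigcap_{\phi}(\phi^{\mathrm{an}})^{-1}(\phi^{\mathrm{an}}(X_0))$ by a dimension count (Lemma \ref{dim}). To salvage your approach you would have to actually prove the extension of the defining ideal across the boundary; the paper's argument avoids this issue entirely.
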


 Chow's theorem over nonarchimedean fields follows from Theorem \ref{main} as follows.
When $Y_{\Sigma}$ is the projective space, the analytic subvariety  $X$ is compact.
By \cite[Theorem 1]{Mar15}, one can see that the tropicalization $\Trop(X)$ is a  finite union of polyhedra. 
Hence, by Theorem \ref{main}, the analytic subvariety $X$ is  the analytification of a Zariski closed algebraic subvariety  of $Y_{\Sigma}$.

 In \cite{MNN14}, Madani,  L.\ Nisse, and  M.\ Nisse proved similar results for  analytic subvarieties of  tori  over the field of complex numbers.
	
	This paper is organized as follows.
	In Section 2, we   recall  basic notions of Berkovich analytic geometry.
	In Section 3, we   recall  basic notions of toric geometry and tropical geometry. We also recall that the tropicalizations of  Zariski closed analytic subvarieties of tori can be calculated using initial forms.
	In Section 4, we prove Theorem \ref{main} for analytic hypersurfaces in tori by  calculating their tropicalizations explicitly in terms of initial forms of analytic functions.
In Section 5, we prove Theorem \ref{main} by using Payne's idea in \cite{Pay09-2}. We use  surjective homomorphisms of tori to reduce to the  case of hypersurfaces.
Finally, in Section 6, we give examples of the tropicalizations of  analytic and algebraic hypersurfaces in the $2$-dimensional torus.

\section{Preliminaries on Berkovich spaces}\label{sec2}
In this paper, we use the language of Berkovich analytic geometry.
 We refer to \cite{Ber90} and \cite{Ber93} for basic notations on Berkovich analytic geometry.

Let $K$ be a complete non-archimedean valuation field with non-trivial absolute value $| \cdot |$, and $K^{\circ}$ (resp.\ $k$) its valuation ring (resp.\ residue field). 
Let $\overline{\ \cdot \ } \colon K^{\circ} \rightarrow k $ be the projection.
We fix an algebraic closure $K^{\alg} $ of $K$. We also denote the extension of the  absolute value $| \cdot |$ on $K$ to $K^{\alg}$ by the same symbol.
We put $\val(a)
:=-\log |a| $ for $ a \in (K^{\alg})^\times$,  $\val(0):= \infty$, and  $\Gamma := \val ((K^{\alg})^\times)$.
There exists a group homomorphism $ \varphi \colon \Gamma \rightarrow (K^{\alg})^\times $ such that $\val \circ  \varphi  = \id_\Gamma$, where $\id_\Gamma$ is the identity map on $\Gamma$ \cite[Lemma 2.1.15]{MS15}. We fix such  $ \varphi \colon \Gamma \rightarrow (K^{\alg})^\times $ in this paper.

In this paper,  \textit{analytic spaces} mean Berkovich analytic spaces; see \cite{Ber90} and \cite{Ber93}.
For a $K$-analytic space $Z$ and a coherent ideal sheaf $\I \subset \O_Z$,  the Zariski closed $K$-analytic subspace corresponding to $\I$ is denoted by $V(\I) \subset Z$; see \cite[Proposition 3.1.4]{Ber90}.

For each scheme $Z$ locally of finite type over $K$,  the $K$-analytic space associated to $Z$ is denoted by  $Z^{\mathrm{an}}$; see \cite[Theorem 3.4.1]{Ber90}.
The scheme $Z$ is reduced, pure  $d$-dimensional,  irreducible, or separated  if and only if $Z^{\mathrm{an}}$ has the same property; see \cite[Proposition 2.7.16]{Duc17} for irreducibility, \cite[Proposition 3.4.6]{Ber90} for  separatedness, and \cite[Proposition 3.4.3]{Ber90} for  the others.
 
In this paper, an algebraic variety over $K$ means  a reduced separated scheme  of finite type over $K$.  
For an algebraic variety $Z$ over $K$, a Zariski closed analytic subvariety of $Z^{\mathrm{an}}$  means  a reduced Zariski closed $K$-analytic subspace of $Z^{\mathrm{an}}$.
We say that a Zariski closed analytic subvariety  of  $W \subset Z^{\mathrm{an}}$  is \textit{algebraic} if $W$ is  the analytification of a Zariski closed algebraic subvariety of $Z$.

We recall basic properties of separatedness and relative boundaries of morphisms of Berkovich analytic spaces; see \cite[Definition 2.5.7 and Section 3.1]{Ber90} for the definition of relative boundaries.                                                      

\begin{lem}\label{bd}
	Let $W$ be a Zariski closed analytic subvariety of the analytification $Z^{\mathrm{an}}$ of an algebraic variety $Z$ over $K$.
	Then for any $K$-analytic space $U$, a morphism $\phi \colon W \rightarrow U$ of $K$-analytic spaces is separated, and the relative boundary of $\phi$ is empty.
\end{lem}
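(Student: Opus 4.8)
The plan is to derive everything from the single \emph{absolute} fact that the structure morphism $W \to \M(K)$ to the final object $\M(K)$ of the category of $K$-analytic spaces is separated and has empty boundary, and then to transfer these properties to an arbitrary $\phi$ through the cancellation and composition formulas of \cite{Ber90,Ber93}. Every morphism $\phi \colon W \to U$ factors the structure morphism as $W \xrightarrow{\phi} U \to \M(K)$, and the argument will use nothing whatsoever about $U$.

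For separatedness, I would first note that $Z$, being an algebraic variety, is separated, so $Z^{\mathrm{an}}$ is separated by \cite[Proposition 3.4.6]{Ber90}; since $\iota \colon W \hookrightarrow Z^{\mathrm{an}}$ is a closed immersion, $W$ is separated over $\M(K)$ as well. I would then apply the cancellation property for separated morphisms (if a composite $g \circ f$ is separated then $f$ is separated, valid in the setting of \cite{Ber93}) to $W \xrightarrow{\phi} U \to \M(K)$: separatedness of the composite $W \to \M(K)$ forces $\phi$ to be separated.

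For the relative boundary, I would first prove $\partial(W/\M(K)) = \emptyset$. By the composition formula for relative interiors \cite[\S2.5, \S3.1]{Ber90}, applied to $W \xrightarrow{\iota} Z^{\mathrm{an}} \to \M(K)$, one has $\partial(W/\M(K)) = \partial(W/Z^{\mathrm{an}}) \cup \iota^{-1}\big(\partial(Z^{\mathrm{an}}/\M(K))\big)$. The first term is empty because $\iota$ is a closed immersion, hence boundaryless. The second term is the crux: I must show $\partial(Z^{\mathrm{an}}/\M(K)) = \emptyset$ for $Z$ a (possibly non-proper) finite-type scheme. This is where the hypothesis that $W$ lives inside an analytification is genuinely used: reducing by the locality of the relative interior to an affine $Z \subseteq \A^n_K$, the space $Z^{\mathrm{an}}$ is an increasing union of affinoid domains $Z^{\mathrm{an}} \cap E(0,r)$ (with $E(0,r) \subset \A^{n,\mathrm{an}}_K$ the closed polydisk of radius $r$), each relatively compact in, i.e.\ contained in the relative interior of, the next, so that every point of $Z^{\mathrm{an}}$ is relatively interior and the boundary is empty. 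Combining the two terms gives $\partial(W/\M(K)) = \emptyset$.

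Finally, applying the same composition formula to $W \xrightarrow{\phi} U \to \M(K)$ yields $\partial(W/U) \subseteq \partial(W/\M(K)) = \emptyset$, completing the proof. The main obstacle is the boundarylessness of $Z^{\mathrm{an}}$ over $\M(K)$ for non-proper $Z$: in contrast to a bare affinoid such as a closed disk, whose Shilov boundary is nonempty, the analytification of a finite-type scheme is exhausted by affinoids each sitting in the interior of the next, and it is exactly this exhaustion—unavailable for an arbitrary Zariski closed analytic subvariety—that makes the boundary vanish. All remaining steps are formal consequences of the interior/boundary and separatedness calculus and require no information about $U$.
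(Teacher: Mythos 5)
Your argument is correct and follows essentially the same route as the paper: separatedness of $Z^{\mathrm{an}}$ plus cancellation gives separatedness of $\phi$, and the composition formula for relative interiors applied to $W\hookrightarrow Z^{\mathrm{an}}\to \M(K)$ and then to $W\xrightarrow{\phi}U\to \M(K)$ gives $\partial(W/U)\subseteq\partial(W/\M(K))=\emptyset$. The only difference is that where you sketch the affinoid-exhaustion argument for $\partial(Z^{\mathrm{an}}/\M(K))=\emptyset$, the paper simply cites \cite[Theorem 3.4.1]{Ber90}, which already asserts that analytifications are boundaryless.
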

\begin{proof}
	First, we show that $\phi$ is separated.
	Since $Z$ is separated, the analytification $Z^{\mathrm{an}}$  is separated by \cite[Proposition 3.4.6]{Ber90}.
	The Zariski closed analytic subvariety $W \subset Z^{\mathrm{an}}$ is separated by \cite[Proposition 3.1.5]{Ber90}.
	 Hence, $\phi$  is separated by \cite[Proposition 3.1.5]{Ber90}.
	 
	 Second, we show that the relative boundary of $\phi$ is empty.
	 The boundary of $Z^{\mathrm{an}}$ is empty  by \cite[Theorem 3.4.1]{Ber90}. (We note that, in \cite{Ber90}, a  $K$-analytic space is said to be closed if its boundary is empty; see  \cite[Section 3.1, p.49]{Ber90}.)
	 Since the closed immersion $W \hookrightarrow Z^{\mathrm{an}}$ has no boundary by \cite[Corollary 2.5.13 (i) and Proposition 3.1.4 (i)]{Ber90}, the boundary of $W$ is empty by \cite[Proposition 3.1.3 (ii)]{Ber90}.
   Hence the relative boundary of $\phi$ is empty by \cite[Proposition 3.1.3 (ii)]{Ber90}.
\end{proof}

\section{Tropicalizations of analytic varieties  and initial forms}

We recall some properties of normal toric varieties and their \textit{tropicalizations}; see \cite[Chapter 3]{CLS11} for toric varieties and \cite[Section 2 and Section 3]{Pay09-1} for tropicalization.
Let $M $ be a free $\Z$-module of finite rank, and  $Y_{\Sigma}$ 	 the normal toric variety over $K$ associated to a fan $\Sigma$ in $N_\R:=\Hom_\Z(M,\R)$.
There is a natural bijection between the cones $\sigma$ in $\Sigma$ and the torus orbits $O(\sigma)$ in $Y_{\Sigma}$.
For each cone $\sigma \in \Sigma$, the torus orbit $O(\sigma)$ is isomorphic to the torus $ \Spec(K[\sigma^{\perp} \cap M]).$
Its Zariski closure $\overline{O(\sigma)}$ in $Y_{\Sigma}$ is a normal toric variety over $K$ containing it as  the open dense torus orbit.
The \textit{tropicalization map} $$\Trop \colon O(\sigma)^{\mathrm{an}} \rightarrow (N_\sigma )_\R:=\Hom_\Z(\sigma^{\perp} \cap M,\R)$$ 
is the  map given by 
$$ \Trop(| \cdot |_x):= -\log |\cdot|_x  \colon \sigma^{\perp} \cap M \rightarrow  \R $$
for $ | \cdot |_x \in O(\sigma)^{\mathrm{an}}$.
It is proper, surjective, and continuous; see \cite[Section 2]{Pay09-1}.
We define the tropicalization map $$\Trop \colon Y_{\Sigma}^{\mathrm{an}}=\bigsqcup_{\sigma \in \Sigma }  O(\sigma)^{\mathrm{an}} \rightarrow \bigsqcup_{\sigma \in \Sigma } (N_\sigma )_\R$$
by gluing the tropicalization maps $\Trop \colon O(\sigma)^{\mathrm{an}} \rightarrow (N_\sigma )_\R$  together.
We define a topology on the disjoint union  $\bigsqcup_{\sigma \in \Sigma } (N_\sigma )_\R$  as follows.
We extend the canonical topology on $\R$  to that on $\R \cup \{ \infty \}$ so that  $(a, \infty]$ for $a \in \R$ are a basis of neighborhoods of $\infty$.
We also extend  the addition on $\R$  to that on $\R \cup \{ \infty \}$ by $a + \infty = \infty$ for  $a \in \R \cup \{\infty\}$.
For each cone $\sigma \in \Sigma $, we put $S_\sigma := \sigma^{\vee} \cap M$, where 
$$\sigma^{\vee}:= \{ m\in M\otimes_\Z \R \mid n(m)\geq 0 \text{ for all } n \in \sigma \}.$$
Then $\R \cup \{ \infty \}$ and $S_\sigma$ are monoids.
We consider the set of monoid homomorphisms $\Hom (S_\sigma, \R \cup \{ \infty \}) $  as a topological subspace of $(\R \cup \{ \infty \})^{S_\sigma}$. 
We define a topology on $\bigsqcup_{ \substack{\tau \in \Sigma \\ \tau \preceq \sigma  }} (N_\tau )_\R$ by the canonical bijection
$$\Hom (S_\sigma, \R \cup \{ \infty \}) \cong \bigsqcup_{ \substack{\tau \in \Sigma \\ \tau \preceq \sigma  }} (N_\tau )_\R.$$
 Then we define a topology on  $\bigsqcup_{\sigma \in \Sigma } (N_\sigma )_\R$ by gluing the topological spaces $ \bigsqcup_{\substack{\tau \in \Sigma \\ \tau \preceq \sigma  }} (N_\tau )_\R$ together.
The definition of this topology on  $\bigsqcup_{\sigma \in \Sigma } (N_\sigma )_\R$ makes sense since 
for any face $\rho \preceq \sigma$, the canonical embedding of the monoid homomorphism  $\Hom (S_\rho, \R \cup \{ \infty \})$  into $ \Hom (S_\sigma, \R \cup \{ \infty \})$   induces a homeomorphism from $\Hom (S_\rho, \R \cup \{ \infty \})$ onto its image.
We note that the tropicalization map $$\Trop \colon Y_{\Sigma}^{\mathrm{an}} \rightarrow \bigsqcup_{\sigma \in \Sigma } (N_\sigma )_\R$$
is  proper, surjective, and  continuous; see \cite[Section 3]{Pay09-1}.
For an irreducible  Zariski closed analytic subvariety $X \subset Y_{\Sigma}^{\mathrm{an}}$, the image $\Trop(X)$ of $X$  is called the \textit{tropicalization} of $X$.

 In \cite{Gub07}, Gubler showed that for any cone $\sigma \in \Sigma $, the subset $\Trop(X) \cap (N_\sigma )_\R$ of $(N_\sigma )_\R$ is a locally finite union of $\Gamma$-rational polyhedra. Here, $\Gamma = \val ((K^{\alg})^\times)$.
 (See \cite[Definition 2.3.2]{MS15} for the definition of $\Gamma$-rational polyhedra.) Moreover, he showed that for a unique cone $\sigma_X \in \Sigma $  such that $X \cap O (\sigma_X)^{\mathrm{an}}$ is a dense Zariski open analytic subvariety of $X$, the subset $\Trop(X) \cap (N_{\sigma_X} )_\R$ of $(N_{\sigma_X}  )_\R$ is a locally finite union of $d$-dimensional  polyhedra, where $d$ is the dimension of $X$ \cite[Theorem 1.1]{Gub07}.
 Here, we identify $(N_\sigma )_\R$ with $\R^{ \dim O(\sigma)}$ by taking a $\Z$-basis of $\Hom_\Z(\sigma^{\perp} \cap M,\Z)$. 
When $X=Z^{\mathrm{an}} \subset Y_{\Sigma}^{\mathrm{an}}$ for a Zariski closed algebraic subvariety $Z \subset Y_{\Sigma}$, we have  $\Trop(X) =\Trop(Z) $, and $\Trop(Z)\cap (N_\sigma )_\R$ is a finite union of   polyhedra for any cone $\sigma \in \Sigma$; see \cite[Theorem 3.3.5]{MS15}. (See also \cite[Theorem A]{BG84} and \cite[Theorem 2.2.3]{EKL06}. See \cite[Definition 3.2.1 and Section 6.2]{MS15} for the definition of the tropicalizations of algebraic subvarieties of tori and toric varieties.)

Let $M_1, M_2$ be free $\Z$-modules of finite rank, and  $\phi \colon \Spec (K[M_1]) \rightarrow \Spec (K[M_2])$  the homomorphism of algebraic tori over $K$ induced by a homomorphism $M_2 \rightarrow M_1$.
We denote by $$\Trop (\phi^{\mathrm{an}})\colon  \Hom_\Z (M_1,\R) \rightarrow \Hom_\Z(M_2,\R)$$ the $\R$-linear map such that $$\Trop (\phi^{\mathrm{an}}) \circ \Trop = \Trop \circ \phi^{\mathrm{an}};$$ see \cite[Section 1]{Pay09-1}.

We shall introduce  the initial forms of   analytic functions on the analytification $(\G_m^{r})^{\mathrm{an}}$ of the $r$-dimensional torus $$\G_m^{r}:= \Spec(K[T_1^{\pm 1}, \dots ,T_r^{\pm 1}])$$  over $K$ as follows. (See \cite[Section 2.4]{MS15} for the case of Laurent polynomials.)
Let $M'$ be the free abelian group generated by $T_i \ (1\leq i\leq r)$. We identify $\Hom_\Z(M',\R)$ with $\R^r$ by sending $\phi \in \Hom_\Z(M',\R)$ to $(\phi (T_1), \dots, \phi(T_r)) \in \R^r$.
For $u=(u_1,\dots, u_r)\in \Z^r$, we put $T^u:= T_1^{u_1} \cdots T_r^{u_r}$.

For a non-zero analytic function $$f = \sum_{u\in \Z^r} a_u T^u\in \Gamma((\G_m^{r})^{\mathrm{an}}, \O)\setminus \{0\}   \qquad (a_u \in K ),$$
 let $$\Trop (f) \colon \R^r \rightarrow \R $$ be the piecewise linear function given by 
 $$\Trop (f) (w) := \min \{\, \val(a_u) + \langle w,u\rangle \mid u \in \Z^r, a_u \neq 0 \, \} \qquad (w \in \R^r), $$ where $\langle, \rangle$ is the standard inner product on $\R^r$.
 
 We note that  a formal power series 
 $$f=\sum_{u\in \Z^r} a_u T^u \in K[[T_1^{\pm 1}, \dots ,T_r^{\pm 1}]]$$ is an analytic function on $(\G_m^{r})^{\mathrm{an}}$ if and only if  for any $w \in \R^r$, 
 we have $\lim\limits_{\lvert u\rvert \rightarrow \infty} \val(a_u) + \langle w,u\rangle =\infty $, where we put $\lvert u\rvert:= \sum_{i=1}^r \lvert u_i \rvert$.
 Hence the function $\Trop (f)$ is well-defined.
 
 \begin{dfn}
 	The {\em initial form} of $f$ with respect to $w \in \R^r$ is the Laurent polynomial over the residue field $k$ defined by  $$\ini_w(f) := \sum_{ \substack{u\in \Z^r, \ a_u\neq 0 \\  \val(a_u) + \langle w,u\rangle= \Trop (f) (w) }} \overline{a_u \varphi(-\val(a_u))} T^u \in k[T_1^{\pm 1}, \dots ,T_r^{\pm 1}] .$$
 	Here,  $ \varphi \colon \Gamma \rightarrow (K^{\alg})^\times $ is the map fixed in Section \ref{sec2}.
 \end{dfn}

Since $\lim\limits_{\lvert u\rvert \rightarrow \infty} \val(a_u) + \langle w,u\rangle =\infty $,
 the Laurent polynomial $\ini_w(f) $ is well-defined.
 
 We also note that the initial form $\ini_w(f)$ depends on the choice of $ \varphi \colon \Gamma \rightarrow (K^{\alg})^\times $, but,
  in this paper, we focus only on  whether the initial form $\ini_w(f)$ is a monomial or not for each $ w \in \R^r$. This does not depend on the choice of  $ \varphi \colon \Gamma \rightarrow (K^{\alg})^\times $.

\begin{prp}
	
For the Zariski closed analytic subvariety $Z=V(\I) \subset (\G_m^{r})^{\mathrm{an}}$ corresponding to a coherent ideal sheaf $\I \subset \O_{(\G_m^{r})^{\mathrm{an}}}$, we have 
$$\Trop(Z)=\{ \,w \in  \R^r \mid \ini_w (f) \  \text{is not a monomial for any } f \in \Gamma((\G_m^{r})^{\mathrm{an}}, \I)\setminus \{0\}   \, \}.$$
Moreover, when $\I$ is generated by a non-zero analytic function $f\in \Gamma((\G_m^{r})^{\mathrm{an}}, \O)\setminus \{0\} $, 
we have 
$$\Trop(V(f)) = \{ \,w \in  \R^r \mid \ini_w (f) \  \text{is not a monomial}  \, \} .$$ 
\end{prp}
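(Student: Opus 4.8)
The plan is to read both equalities through the reduction theory of the affinoid fibres of the tropicalization map. For $w \in \R^r$ put $\mathcal{T}_w := \Trop^{-1}(w) = \{\, |\cdot|_x \in (\G_m^r)^{\mathrm{an}} \mid |T_i|_x = e^{-w_i}\ \text{for all}\ i \,\}$. This is a $K$-affinoid domain whose canonical reduction is a torus over the residue field $k$ (when some $e^{-w_i}\notin|K^\times|$ one uses Temkin's graded reduction, the $\varphi$-twist in the definition of $\ini_w$ being exactly the per-term normalization to valuation $0$ that lands the reduced coefficients in $k$), and under this identification the reduction of $f|_{\mathcal{T}_w}$ is precisely $\ini_w(f)$, since the spectral norm on $\mathcal{T}_w$ is the Gauss norm $\|f\|_{\sup}=e^{-\Trop(f)(w)}$. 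The first task is to make this identification precise, so that ``$\ini_w(f)$ is a monomial'' becomes ``$\ini_w(f)$ is a unit of $k[T_1^{\pm 1},\dots,T_r^{\pm 1}]$,'' the units of a Laurent ring over a field being exactly the scalar multiples of monomials. Granting this, the inclusions $\subseteq$ in both formulas are immediate and use only the trivial implication ``reduction a unit $\Rightarrow$ function a unit'': if $\ini_w(f)$ is a monomial then $f|_{\mathcal{T}_w}$ is invertible, so $V(f)\cap\mathcal{T}_w=\emptyset$; and for $w\in\Trop(Z)$, a point $x\in Z$ with $\Trop(x)=w$ lies in $\mathcal{T}_w$ and kills every global section, forcing $V(f)\cap\mathcal{T}_w\neq\emptyset$, hence $\ini_w(f)$ is not a monomial for each $f\in\Gamma((\G_m^r)^{\mathrm{an}},\I)\setminus\{0\}$.

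For the reverse inclusion in the hypersurface formula, suppose $\ini_w(f)$ is not a monomial. Then it is a non-unit of $k[T^{\pm 1}]$, so over an algebraic closure $\overline{k}$ its zero locus in the torus is nonempty (Nullstellensatz). The surjectivity and fibrewise structure of the reduction map $\mathrm{red}\colon \mathcal{T}_w \to \Spec k[T^{\pm 1}]$ then lift a zero of $\ini_w(f)$ to a genuine point $x\in V(f)\cap\mathcal{T}_w$: restricting to a residue disc on which $\ini_w(f)$ has a finite-order zero, this becomes a one-variable Newton-polygon (Weierstrass) problem, which is solvable. Hence $w\in\Trop(V(f))$, giving $\supseteq$ in the second formula. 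Equivalently, in this principal case the reduced ideal is $(\ini_w(f))$, which contains a unit if and only if $\ini_w(f)$ is itself a monomial, so no separate lifting lemma is needed.

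The reverse inclusion in the first formula is the substantive point, a non-archimedean analytic analogue of the Fundamental Theorem of Tropical Geometry [MS15, Theorem 3.3.5]. Assume no global section of $\I$ has monomial initial form; I must show $Z\cap\mathcal{T}_w\neq\emptyset$. Since $(\G_m^r)^{\mathrm{an}}$ is quasi-Stein, the analytic analogue of Cartan's Theorem A shows $\I$ is generated by its global sections, so the ideal $J_w$ cut out by $\I$ in the affinoid algebra $\mathcal{A}_w$ of $\mathcal{T}_w$ is generated by restrictions of global sections, finitely many by Noetherianity of $\mathcal{A}_w$. Now $Z\cap\mathcal{T}_w=\emptyset$ is equivalent to $1\in J_w$, and the key step is that $1\in J_w$ forces the initial ideal $\langle \ini_w(f) : f\in\Gamma((\G_m^r)^{\mathrm{an}},\I)\rangle \subseteq k[T^{\pm 1}]$ to contain a monomial, combined with the Maclagan--Sturmfels lemma that a monomial lying in an initial ideal is already the initial form of a single element of the ideal. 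This would exhibit a global section with monomial initial form, contradicting the hypothesis; hence $Z\cap\mathcal{T}_w\neq\emptyset$ and $w\in\Trop(Z)$.

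I expect the main obstacle to be exactly this last implication: controlling the reduction of the affinoid ideal $J_w$, i.e. detecting triviality of $\mathcal{A}_w/J_w$ by a monomial in the initial ideal, since the reduction functor is not exact on ideals. Over polynomial rings this is the Gröbner/initial-ideal lifting carried out in [MS15]; here one must run it with \emph{analytic} coefficients, which forces care about convergence (so that only finitely many monomials are relevant near $w$ and each $\ini_w(f)$ is a genuine Laurent polynomial) and, when some $e^{-w_i}\notin|K^\times|$, about graded reduction. A convenient reduction device is to approximate each relevant global section by its finitely supported truncation, checking that truncation alters neither $\Trop(f)$ nor $\ini_w(f)$ near $w$, thereby reducing the assertion to the algebraic Fundamental Theorem already available in [MS15].
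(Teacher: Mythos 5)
Your overall architecture (reading everything through the reduction of the affinoid fibre $\Trop^{-1}(w)$, so that ``$\ini_w(f)$ is a monomial'' becomes ``$f$ reduces to a unit'') is the right picture, and your easy inclusion $\subseteq$ and the hypersurface case are essentially sound. But note that the paper does not prove the first assertion at all: it is quoted directly from Rabinoff [Rab12, Theorem 7.8], and the second assertion is then deduced in one line from the first via the multiplicativity $\ini_w(fg)=\ini_w(f)\,\ini_w(g)$ (valid because the Gauss norm on the fibre is multiplicative), together with the fact that a Laurent polynomial over a field divides a monomial only if it is itself a monomial. Your proposal instead attempts to reprove Rabinoff's theorem, and the one step you yourself flag as ``the substantive point'' --- that $1\in J_w$ forces a monomial into the initial ideal, and that such a monomial is realized as $\ini_w$ of a single \emph{global} section of $\I$ --- is left as a strategy rather than a proof. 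That is a genuine gap: it is precisely the content of the cited theorem.

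Concretely, your proposed reduction to the algebraic Fundamental Theorem by truncation has a circularity. If $1=\sum g_i f_i$ on the fibre with $g_i$ affinoid and $f_i$ global sections, you may replace the $f_i$ by truncations $p_i$ close in sup norm and conclude $w\notin\Trop(V(p_1,\dots,p_m))$, whence [MS15] produces $h=\sum q_i p_i$ with $\ini_w(h)$ a monomial. But to conclude that $\ini_w\bigl(\sum q_i f_i\bigr)=\ini_w(h)$ you need the truncation error to be small compared with $\Trop(h)(w)$, and $h$ (hence the required precision, and the size of the $q_i$) is only known \emph{after} the truncation has been chosen. This can be repaired --- Rabinoff's Gr\"obner theory over the value group does exactly this bookkeeping --- but as written the argument does not close. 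Separately, your parenthetical claim that ``in this principal case the reduced ideal is $(\ini_w(f))$'' needs justification, since reduction of ideals is not exact in general; here it does follow from multiplicativity of the Gauss norm, which is in effect the route the paper takes for the second assertion.
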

\begin{proof}
	 For each affinoid domain $U$ of $(\G_m^{r})^{\mathrm{an}}$, $ w \in \Trop(U)$, and $f  \in \Gamma(U, \I)$ with  $\Trop^{-1}(w)\subset U$, when  the initial form $\ini_w (f)$ is not a monomial,   the initial form $\ini_w (g)$ is  not a monomial for a  function $ g  \in \Gamma(U, \I)$ which is sufficiently close to $f$ in $ \Gamma(U, \I)$.
	 By \cite[Theorem 3.1.1 and Example 3.1.3]{Van75},
	 the image of $\Gamma((\G_m^{r})^{\mathrm{an}}, \I)$ in $\Gamma(U, \I)$ is dense.
	  Hence 
	the first assertion follows from  \cite[Theorem 7.8]{Rab12}.
	One can show that $$\ini_w (fg)=\ini_w (f)\ini_w (g)$$ for any $g \in \Gamma((\G_m^{r})^{\mathrm{an}}, \O) \setminus \{0\}$  and any $w\in \R^r$ in the same way as  \cite[Lemma 2.6.2 (3)]{MS15}, where the equality is proved for Laurent polynomials.
	  Hence the second assertion holds.
\end{proof}
  
\section{Tropicalization of  analytic hypersurfaces in tori}
In this section, we prove  Theorem \ref{main} for analytic hypersurfaces in the $r$-dimensional torus $(\G_m^r)^{\mathrm{an}}$.

First, we  show that the tropicalization  of an analytic hypersurface in $(\G_m^r)^{\mathrm{an}}$ is the $(r-1)$-skeleton (i.e., the union of  cells  of dimension less than or equal to $r-1$) of the polyhedral complex  associated to the analytic function defining the analytic hypersurface. (See \cite[Proposition 3.1.6 and Remark 3.1.7]{MS15} for the case of algebraic hypersurfaces.) See \cite[Section 2.3]{MS15} for  the terminology of polyhedral geometry used in this paper.

	For a non-zero analytic function $$f = \sum_{u\in \Z^r} a_u T^u\in \Gamma((\G_m^{r})^{\mathrm{an}}, \O)\setminus \{0\} \qquad (a_u \in K ),$$
	we write $ \Sigma_{\Trop (f) }$ for the coarsest polyhedral complex in $\R^r$
	containing $$ \sigma_u := \{ \, w\in \R^r \mid \Trop(f)(w)= \val(a_u) + \langle w,u\rangle \, \} $$
	for every $ u \in \Z^r$ satisfying $a_u \neq 0$, where $\langle, \rangle$ is the standard inner product on $\R^r$; see \cite[Definition 2.5.5]{MS15}.
	The polyhedral complex $ \Sigma_{\Trop (f) }$ is  pure $r$-dimensional   and its support is $\R^r$.

\begin{lem}\label{n-1skel}
	For a non-zero analytic function  $$f = \sum_{u\in \Z^r} a_u T^u\in \Gamma((\G_m^{r})^{\mathrm{an}}, \O)\setminus \{0\}   \qquad (a_u \in K ),$$
	let $V(f) \subset(\G_m^r)^{\mathrm{an}}$ be the analytic hypersurface defined by $f$. 
	Then the tropicalization $\Trop(V(f))$ is the $(r-1)$-skeleton of $ \Sigma_{\Trop (f) } $, i.e., the union of  cells of $ \Sigma_{\Trop (f) } $ of dimension less than or equal to $r-1$.
\end{lem}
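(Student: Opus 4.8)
The plan is to combine the Proposition above with the combinatorial structure of the polyhedral complex $\Sigma_{\Trop(f)}$. By the second assertion of that Proposition we already have
$$\Trop(V(f)) = \{\, w \in \R^r \mid \ini_w(f) \text{ is not a monomial} \,\},$$
so it suffices to identify the right-hand side with the $(r-1)$-skeleton of $\Sigma_{\Trop(f)}$. First I would record that every coefficient occurring in $\ini_w(f)$ is \emph{nonzero}: whenever $a_u \neq 0$, the element $a_u\,\varphi(-\val(a_u))$ has valuation $\val(a_u) + (-\val(a_u)) = 0$, hence lies in the valuation ring and has nonzero image in the residue field. Consequently the number of monomials appearing in $\ini_w(f)$ equals the cardinality of the minimizing set
$$S(w) := \{\, u \in \Z^r \mid a_u \neq 0, \ \val(a_u) + \langle w,u\rangle = \Trop(f)(w) \,\},$$
and therefore $\ini_w(f)$ is a monomial if and only if $|S(w)| = 1$.

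It then remains to show that the locus $\{\, w \in \R^r \mid |S(w)| \geq 2 \,\}$ coincides with the $(r-1)$-skeleton of $\Sigma_{\Trop(f)}$, using that this complex is pure $r$-dimensional with support $\R^r$. Note that $u \in S(w)$ exactly when $w \in \sigma_u$. For the inclusion into the skeleton, I would observe that for distinct $u \neq u'$ the equation $\val(a_u) + \langle w,u\rangle = \val(a_{u'}) + \langle w,u'\rangle$ defines an affine hyperplane, so $|S(w)| \geq 2$ forces $w \in \sigma_u \cap \sigma_{u'}$, a set of dimension at most $r-1$; hence $w$ lies in a cell of dimension $\leq r-1$. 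For the reverse inclusion I would argue contrapositively: if $S(w) = \{u\}$ is a singleton, then the term $u$ strictly beats all others at $w$, and by continuity it does so on an open neighborhood of $w$, placing $w$ in the interior of the full-dimensional cell $\sigma_u$ and thus off the $(r-1)$-skeleton. Together with the displayed equality from the Proposition, this gives the claim.

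The step demanding the most care is the continuity argument in the reverse inclusion, since $f$ may have infinitely many terms and so, a priori, the minimum defining $\Trop(f)(w)$ need not be governed locally by finitely many of them. The hard part will therefore be to exploit the uniform convergence of $f$ on compact subsets of $(\G_m^r)^{\mathrm{an}}$ to ensure that on a compact neighborhood of any $w$ only finitely many terms can ever attain the minimum. This finiteness is exactly what both makes $\Sigma_{\Trop(f)}$ a genuine locally finite polyhedral complex and lets one treat $f$, for the purposes of the minimizer stratification, just as the finite Laurent-polynomial case treated in \cite[Section 3.1]{MS15}.
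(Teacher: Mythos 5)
Your argument is correct and follows the same route the paper takes: the paper's proof is a one-line reference to the Laurent-polynomial case (\cite[Proposition 3.1.6, Remark 3.1.7]{MS15}), and your write-up simply supplies the details of that argument, combining the initial-form description of $\Trop(V(f))$ with the identification of the locus where the minimum is attained at least twice with the $(r-1)$-skeleton. You also correctly isolate the only genuinely new point in the analytic setting, namely that uniform convergence on compact subsets forces the minimum defining $\Trop(f)$ to be governed locally by finitely many terms, which is what makes $\Sigma_{\Trop(f)}$ a locally finite complex and lets the finite case carry over.
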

\begin{proof}
	One can prove this lemma in the same way as in  the case of Laurent polynomials; see \cite[Proposition 3.1.6 and Remark 3.1.7]{MS15}.
\end{proof}

We shall now prove  Theorem \ref{main} for analytic hypersurfaces in  $(\G_m^r)^{\mathrm{an}}$.

\begin{thm}\label{hyper}
	Let $ f  \in \Gamma((\G_m^{r})^{\mathrm{an}}, \O)\setminus \{0\}$ be a non-zero analytic function.
	Assume that $\Trop(V(f))$ is a finite union of polyhedra.
	Then $f$ is a Laurent polynomial.
	In particular, the Zariski closed analytic subvariety $ V(f) \subset (\G_m^r)^{\mathrm{an}}$ is the analytification of the algebraic hypersurface of $\G_m^r$ defined by $f$.
\end{thm}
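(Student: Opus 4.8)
The plan is to prove the contrapositive: assuming $f = \sum_{u} a_u T^u$ is \emph{not} a Laurent polynomial (i.e.\ infinitely many $a_u \neq 0$), I will show that $\Trop(V(f))$ cannot be a finite union of polyhedra. By Lemma~\ref{n-1skel} we know $\Trop(V(f))$ is exactly the $(r-1)$-skeleton of the polyhedral complex $\Sigma_{\Trop(f)}$, so it suffices to show that the \emph{lower envelope} structure determined by the affine functions $w \mapsto \val(a_u) + \langle w, u\rangle$ produces infinitely many distinct $(r-1)$-dimensional faces, or at least a support that is not a finite polyhedral union. The key point is that each nonzero term contributes a region $\sigma_u$ on which it achieves the minimum, and a term with $a_u \neq 0$ genuinely contributes to the envelope (and hence to a full-dimensional cell $\sigma_u$) precisely when its affine function is not dominated everywhere by the others.

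First I would set up the envelope combinatorics carefully, using that $f$ converges on each compact subset: this forces $\val(a_u) + \langle w, u\rangle \to \infty$ as $|u| \to \infty$ for any fixed $w$, which is exactly what makes $\Trop(f)(w)$ a well-defined finite minimum. The crux is to identify which exponents $u$ actually attain the minimum somewhere, i.e.\ which $\sigma_u$ are full-dimensional. The convergence condition says that the points $(u, \val(a_u)) \in \Z^r \times \R$ lie in a region growing faster than linearly, so their lower convex hull has infinitely many vertices whenever infinitely many of the $a_u$ are ``extremal'' (not interior to the convex hull of the others together with the upward cone). I would argue that if $f$ is not a Laurent polynomial, then after discarding terms that never contribute to the minimum, either one reduces $f$ to an equivalent polynomial (contradicting non-polynomiality after multiplying by a monomial and using the unit structure), or infinitely many exponents remain extremal, yielding infinitely many full-dimensional cells $\sigma_u$ and hence infinitely many $(r-1)$-faces in the skeleton.

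The heart of the argument is therefore a statement about the lower convex hull of $\{(u, \val(a_u))\}$: its set of vertices is finite if and only if $f$ is, up to a unit and monomial, a Laurent polynomial. I would make this precise by showing that a polyhedron $\Trop(V(f))$ with finitely many facets would force $\Sigma_{\Trop(f)}$ to have finitely many maximal cells, hence finitely many terms attaining the minimum anywhere; and a term never attaining the minimum can be absorbed without changing $V(f)$ as an analytic subvariety (here I would invoke that initial forms multiply, as recorded in the Proposition, to control when removing a term alters the tropicalization). Once the relevant exponent set is finite, $V(f)$ coincides with the vanishing locus of a Laurent polynomial, which is algebraic.

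The main obstacle I anticipate is the step bridging ``$\Trop(V(f))$ is a finite union of polyhedra'' and ``the lower convex hull has finitely many vertices.'' A finite union of polyhedra could in principle be the $(r-1)$-skeleton of an \emph{infinite} polyhedral complex whose infinitely many cells nonetheless glue into finitely many maximal polyhedra; ruling this out requires showing that distinct extremal exponents produce genuinely distinct facet directions (normals), so that infinitely many extremal terms cannot collapse into finitely many polyhedral pieces. Controlling this degeneration---ensuring that extremal vertices of the lower hull force distinct linear pieces in the tropical polynomial and thus distinct walls in the skeleton---is where the real work lies, and I expect it to rely essentially on the non-triviality of the valuation together with the convergence of $f$.
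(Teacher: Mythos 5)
Your overall shape --- reduce via Lemma \ref{n-1skel} to the combinatorics of the cells $\sigma_u$ of $\Sigma_{\Trop(f)}$ and argue that infinitely many nonzero terms force infinitely many cells --- is the right one, and is the contrapositive of what the paper does. But both pivotal steps are left unproved, and one of the ideas you lean on is flawed. First, the claim that infinitely many nonzero $a_u$ yield infinitely many ``extremal'' exponents is exactly the crux, and your dichotomy does not establish it: the branch ``discard terms that never attain the minimum and reduce $f$ to an equivalent polynomial'' does not make sense, since deleting terms changes $f$ and changes $V(f)$ (only $\Trop(V(f))$ is insensitive to non-extremal terms), and the theorem asserts that $f$ \emph{itself} is a Laurent polynomial. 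The missing argument is short and concrete: suppose the maximal cells of $\Sigma_{\Trop(f)}$ are $\sigma_u$ for $u$ in a finite set $\Lambda$, so $\bigcup_{u\in\Lambda}\sigma_u=\R^r$. If infinitely many $a_v\neq 0$, the exponent set is unbounded, so some $v\notin\Lambda$ has $\lvert v_i\rvert>\lvert u_i\rvert$ for all $u\in\Lambda$ in some coordinate $i$; taking $x=(0,\dots,x_i,\dots,0)$ with $x_i$ far enough in the appropriate direction gives $\val(a_v)+\langle x,v\rangle<\val(a_u)+\langle x,u\rangle$ for every $u\in\Lambda$, so $x\notin\bigcup_{u\in\Lambda}\sigma_u$, a contradiction. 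Note this bounds the \emph{entire} exponent set at once, with no need to separate extremal from non-extremal terms.

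Second, the step you flag as ``where the real work lies'' --- ruling out that infinitely many cells produce a skeleton that is nonetheless a finite union of polyhedra --- does not require any analysis of facet normals or of the lower convex hull. If the $(r-1)$-skeleton is a finite union of polyhedra of dimension at most $r-1$, its complement in $\R^r$ has finitely many connected components; each component lies in the interior of a single maximal cell of the pure $r$-dimensional complex $\Sigma_{\Trop(f)}$ with support $\R^r$, so there are only finitely many maximal cells. That is the entire bridge, and it is the first line of the paper's proof. As written, your proposal identifies the correct objects but defers precisely the two points that constitute the proof, so it is not yet a proof.
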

\begin{proof}
	We put $$f= \sum_{u\in \Z^r} a_u T^u \in \Gamma((\G_m^{r})^{\mathrm{an}}, \O)\setminus \{0\}.$$
	By Lemma \ref{n-1skel},  the $(r-1)$-skeleton of $\Sigma_{\Trop (f) } $ is a finite union of polyhedra.
	Since the polyhedral complex $\Sigma_{\Trop (f) }$ is pure $r$-dimensional and its support   is $\R^r$, there are only finitely many maximal cells of $ \Sigma_{\Trop (f) } $.
	We take a finite subset $\Lambda \subset \Z^r$ such that for
	each maximal cell $\sigma \in \Sigma_{\Trop (f) }$, there exists $u \in \Lambda$ satisfying $\sigma_u =\sigma$.
	Then we have $ \bigcup_{u \in \Lambda} \sigma_u = \R^r $;
	in other words, for any $w \in \R^r$, there exists $u \in \Lambda$ such that $$\Trop(f)(w)= \val(a_u) + \langle w,u\rangle. $$
	
	 We shall show that there are only finitely many $u \in \Z^r$ satisfying $a_u \neq 0$. Assume that there exist infinitely many $u \in \Z^r$ with $a_u \neq 0$. Then there exist $v=(v_1,\dots,v_r) \in \Z^r\setminus \Lambda$ and $1\leq i\leq  r$ such that $a_v \neq 0$ and 
	 $\lvert v_i\lvert > \lvert u_i\lvert $  for any $u=(u_1,\dots,u_r) \in \Lambda$.
	 Take a real number $x_i \in \R$ such that  
	 $$\val(a_v) +  x_i v_i < \val(a_u) +  x_i u_i$$
	 for any $u=(u_1,\dots,u_r) \in \Lambda$.
	 Let $x:=(0,\dots,0,x_i,0,\dots,0) \in \R^r$ be the element such that  the $i$-th entry is $x_i$ and the $j$-th entry is $0$ for $j \neq i$.
	 Then we have $$\val(a_v) + \langle x, v\rangle < \val(a_u) + \langle x,u\rangle$$
	 for any $u \in \Lambda$.
	 Hence $x\in \R^r$ is not contained in $\bigcup_{u \in \Lambda} \sigma_u$, which  contradicts   $\bigcup_{u \in \Lambda} \sigma_u =\R^r$.
	 
	 Consequently, there are only finitely many $u \in \Z^r$ satisfying $a_u \neq 0$. 
	 In other words,  $f$ is a Laurent polynomial.
\end{proof}

\begin{rem}\label{codim1}
	For an irreducible  Zariski closed analytic subvariety $Z \subset (\G_m^r)^{\mathrm{an}}$  of codimension $1$, there exists a non-zero analytic function $f \in \Gamma((\G_m^{r})^{\mathrm{an}}, \O)\setminus \{0\}$ such that $Z =V(f)$.
	This easily follows from the fact that $Z \subset (\G_m^r)^{\mathrm{an}}$ is a Cartier divisor, and the  line bundle on  $(\G_m^r)^{\mathrm{an}}$ corresponding to $Z$ is trivial; see \cite[Lemma 2.7.4]{Lut16}.
\end{rem}

\section{Proof of the main theorem}
In this section, we  shall prove Theorem \ref{main} by using  surjective homomorphisms of tori to reduce to the  case of hypersurfaces. 

Let $M $ be a free $\Z$-module of finite rank.
Let $Y_{\Sigma}$ be the normal toric variety over $K$ associated to a fan $\Sigma$ in $N_\R :=\Hom_\Z( M, \R)$, and $X$  an  irreducible Zariski closed analytic subvariety of $Y_{\Sigma}^{\mathrm{an}}$.
Take  a unique cone $\sigma_X \in \Sigma $  such that $X \cap O (\sigma_X)^{\mathrm{an}}$   is a dense Zariski open analytic subvariety of $X$. 
We fix a $\Z$-basis $m_1,\dots ,m_n$ of $(\sigma_X)^{\perp} \cap M$.
By using this $\Z$-basis, we identify $(N_{\sigma_X} )_\R:=\Hom_\Z( (\sigma_X)^{\perp} \cap M, \R)$ with $\R^n$.

Assume that $\Trop(X ) \cap (N_{\sigma_X} )_\R$ is a finite union of polyhedra.

\begin{lem}\label{orbit}
	Assume that the Zariski closed analytic subvariety $X \cap O (\sigma_X)^{\mathrm{an}}\subset O (\sigma_X)^{\mathrm{an}}$ is algebraic.
	Then the analytic subvariety $ X\subset Y_{\Sigma}^{\mathrm{an}}$ is algebraic.
\end{lem}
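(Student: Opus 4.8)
The plan is to descend the algebraic structure on the open part $X\cap O(\sigma_X)^{\mathrm{an}}$ to all of $X$ by forming an algebraic Zariski closure and checking that it agrees with $X$. By the hypothesis and the definition of algebraicity, there is a Zariski closed algebraic subvariety $Z_0\subseteq O(\sigma_X)$ with $X\cap O(\sigma_X)^{\mathrm{an}}=Z_0^{\mathrm{an}}$. Let $\overline{Z_0}$ be the Zariski closure of $Z_0$ in $Y_{\Sigma}$, a Zariski closed algebraic subvariety of $Y_{\Sigma}$. I would prove that $X=\overline{Z_0}^{\mathrm{an}}$; since $\overline{Z_0}^{\mathrm{an}}$ is by construction the analytification of a Zariski closed algebraic subvariety, this is exactly the assertion that $X$ is algebraic.

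First I would record the structural facts needed for the comparison. The subvariety $Z_0^{\mathrm{an}}=X\cap O(\sigma_X)^{\mathrm{an}}$ is Zariski open dense in the irreducible $X$, hence irreducible; therefore $Z_0$ is irreducible and so is $\overline{Z_0}$, and by \cite[Theorem 2.3.1]{Con99} the analytification $\overline{Z_0}^{\mathrm{an}}$ is irreducible as well. Since $Z_0$ is closed in the open orbit $O(\sigma_X)$, one has $\overline{Z_0}\cap O(\sigma_X)=Z_0$, and as analytification is compatible with intersecting a closed subvariety against the Zariski open $O(\sigma_X)^{\mathrm{an}}\subseteq Y_{\Sigma}^{\mathrm{an}}$, this gives $\overline{Z_0}^{\mathrm{an}}\cap O(\sigma_X)^{\mathrm{an}}=Z_0^{\mathrm{an}}$. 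In particular $Z_0^{\mathrm{an}}$ is a nonempty Zariski open subset of the irreducible space $\overline{Z_0}^{\mathrm{an}}$, hence Zariski dense in it.

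With these in hand, I would identify both $X$ and $\overline{Z_0}^{\mathrm{an}}$ as the smallest Zariski closed analytic subvariety of $Y_{\Sigma}^{\mathrm{an}}$ containing $Z_0^{\mathrm{an}}$. For $X$ this is immediate: $X$ is irreducible and contains $Z_0^{\mathrm{an}}$ as a Zariski open dense subvariety. For $\overline{Z_0}^{\mathrm{an}}$, I would argue that any Zariski closed analytic subvariety $W\subseteq Y_{\Sigma}^{\mathrm{an}}$ with $Z_0^{\mathrm{an}}\subseteq W$ already contains $\overline{Z_0}^{\mathrm{an}}$: the intersection $W\cap\overline{Z_0}^{\mathrm{an}}$ is Zariski closed in $\overline{Z_0}^{\mathrm{an}}$ and contains the Zariski dense subset $Z_0^{\mathrm{an}}$, so $W\cap\overline{Z_0}^{\mathrm{an}}=\overline{Z_0}^{\mathrm{an}}$. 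Hence both coincide with the analytic Zariski closure of $Z_0^{\mathrm{an}}$, giving $X=\overline{Z_0}^{\mathrm{an}}$ and proving the lemma. The genuinely load-bearing step here, and the one I would flag as the crux, is the passage from the algebraic to the analytic Zariski closure: it relies on the irreducibility of $\overline{Z_0}^{\mathrm{an}}$ via \cite[Theorem 2.3.1]{Con99} and on the fact that a nonempty Zariski open in an irreducible analytic space is Zariski dense, whereas the orbit-wise identities $\overline{Z_0}\cap O(\sigma_X)=Z_0$ and its analytic counterpart are routine.
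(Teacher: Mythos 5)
Your proof is correct and takes essentially the same route as the paper: both form the algebraic Zariski closure $\overline{Z_0}$ of $Z_0=X'$ in $Y_{\Sigma}$ and identify $X$ with $\overline{Z_0}^{\mathrm{an}}$. The only difference is that the paper obtains $(\overline{Z_0})^{\mathrm{an}}=\overline{Z_0^{\mathrm{an}}}=X$ in one stroke from \cite[Proposition 3.4.4]{Ber90} (compatibility of analytification with Zariski closure), whereas you verify the same identity by hand via irreducibility of analytifications \cite[Theorem 2.3.1]{Con99} and a minimality/density argument.
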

\begin{proof}
Let $X' \subset O (\sigma_X)$ be the  algebraic subvariety such that  $X'^{\mathrm{an}}=X \cap O (\sigma_X)^{\mathrm{an}}$.
By \cite[Proposition 3.4.4]{Ber90}, we have $(\overline{X'})^ {\mathrm{an}}=\overline{(X'^{\mathrm{an}})}=X$, where $\overline{X'}$ is the algebraic Zariski closure of $X'$ in $Y_{\Sigma}$.
 Hence the analytic subvariety $ X\subset Y_{\Sigma}^{\mathrm{an}}$ is algebraic.
\end{proof}

By Lemma \ref{orbit}, it suffices to show that $X \cap O (\sigma_X)^{\mathrm{an}}\subset O (\sigma_X)^{\mathrm{an}}$ is algebraic.

We put $X':=X \cap O (\sigma_X)^{\mathrm{an}}$.
Since $X'\subset X$ is Zariski open and $X$ is irreducible, $X'$ is irreducible.
We  put $d:= \dim X'=\dim X.$
By the $\Z$-basis $m_1,\dots ,m_n$ of $(\sigma_X)^{\perp} \cap M$, we identify $O (\sigma_X)$ with $\G_m^n$.
Let $\I \subset \O_{(\G_m^n)^{\mathrm{an}}}$ be the coherent ideal sheaf  such that $V(\I) = X' \subset (\G_m^n)^{\mathrm{an}} $.

Let $\Gr(n-d-1,n)$ be the Grassmannian of $(n-d-1)$-dimensional subspaces of $\Q^n$, which is an integral variety over $\Q$; see \cite[Section 5.3.2]{LB15}.
Let $S$ be the set  of surjective homomorphisms $$\phi \colon \G_m^n \rightarrow \G_m^{d+1}$$
 such that $\Trop(\phi^{\mathrm{an}})\colon \R^n \rightarrow \R^{d+1}$ is injective on  every $d$-dimensional polyhedron contained in $\Trop(X')$.

\begin{lem}
	The subset $$\{ \, \ker (\Trop(\phi^{\mathrm{an}})) \cap \Q^n \in \Gr(n-d-1,n)(\Q) \mid  \phi \in S \,  \} $$ is a dense Zariski open subset in $\Gr(n-d-1,n)(\Q)$.
\end{lem}
\begin{proof}

	For a $d$-dimensional $\Gamma$-rational polyhedron $P \subset \R^n$, 
	we put  $$L(P):=\{\alpha (b-a) \in \R^n \mid a,b \in P , \, \alpha \in \R \}.$$
	It is a linear subspace of $\R^n$ of dimension $d$.
	 Since the polyhedron $P$ is $\Gamma$-rational, the linear space $L(P)$ has a $\R$-basis $\{x_i \in \Q^n\}_{i=1}^d$. (See \cite[Definition 2.3.2]{MS15} for the definition of $\Gamma$-rational polyhedra.)
 
 Hence we have
 \begin{align*}
 & \{ \,A \in \Gr(n-d-1,n)(\Q) \mid \text{the projection }\R^n \rightarrow \R^n/(A\otimes \R )    \text{ is injective on } P \,  \}\\ 
  = &  \{ \,A \in \Gr(n-d-1,n)(\Q) \mid A \cap L(P)=\{0\} \,  \} \\
 = & p^{-1}(\{ \,B \in \P(\wedge^{n-d-1}\Q^n) \mid  B \wedge \wedge^{d} L(P) \neq \{0\} \, \}),
 \end{align*}
 where $$ p\colon \Gr(n-d-1,n)(\Q) \ni U  \mapsto \wedge^{n-d-1}U \in \P(\wedge^{n-d-1}\Q^n)$$ is 
  the Pl$\ddot{\mathrm{u}}$cker embedding.
 Since 
 $$\{ \,B \in \P(\wedge^{n-d-1}\Q^n) \mid  B \wedge \wedge^{d} L(P) \neq \{0\} \, \}$$ is a nonempty Zariski open subset of $ \P(\wedge^{n-d-1}\Q^n)$, 
 the subset $$\{ \,A \in \Gr(n-d-1,n)(\Q) \mid \text{ the projection }\R^n \rightarrow \R^n/(A\otimes \R )   \text{ is injective on } P\,  \}$$ is a nonempty Zariski open subset of $ \Gr(n-d-1,n)(\Q) $.
 (Remind that the algebraic variety structure of the Grassmannian $\Gr(n-d-1,n)$ is defined by the Pl$\ddot{\mathrm{u}}$cker embedding 
 $ p\colon \Gr(n-d-1,n)\rightarrow \P(\wedge^{n-d-1}\Q^n)$
 \cite[Theorem 5.2.1 and Theorem 5.2.3]{LB15}.)
 Since the Grassmannian $ \Gr(n-d-1,n) $ is irreducible, the subset $$\{ \,A \in \Gr(n-d-1,n)(\Q) \mid \text{the projection }\R^n \rightarrow \R^n/(A\otimes \R ) \text{ is injective on }P \,  \}$$ is dense in $ \Gr(n-d-1,n)(\Q) $.
	
	Since for any $A \in \Gr(n-d-1,n)(\Q)$, there exists a surjective group homomorphism $\phi \colon \G_m^n \rightarrow \G_m^{d+1}$ such that $\ker (\Trop(\phi^{\mathrm{an}})) \cap \Q^n  = A$, the subset 
\begin{align*}
& \left\{ \, \ker (\Trop(\phi^{\mathrm{an}})) \cap \Q^n \in \Gr(n-d-1,n)(\Q)  \relmiddle|
 \begin{array}{l}
 \text{a surjective group homomorphism } \phi \colon \G_m^n \rightarrow \G_m^{d+1}  \\  \Trop(\phi^{\mathrm{an}})  \colon \R^n \rightarrow \R^{d+1}  \text{ is injective on } P 
 \end{array} 
\,  \right\} \\
= &\{ \,A \in \Gr(n-d-1,n)(\Q) \mid \text{ the projection }\R^n \rightarrow \R^n/(A\otimes \R ) \text{ is injective on } P \, \}
\end{align*}
	 is a dense Zariski open subset in $\Gr(n-d-1,n)(\Q)$. (We note that for a surjective group homomorphism $\phi \colon \G_m^n \rightarrow \G_m^{d+1}$, the linear map $\Trop(\phi^{\mathrm{an}}) \colon \R^n \rightarrow \R^{d+1}$ conincides with the projection $\R^n \rightarrow \R^n/\ker (\Trop(\phi^{\mathrm{an}}))$. )
	
	Since $\Trop(X')$ is a finite union of $d$-dimensional $\Gamma$-rational polyhedra, the assertion follows.
\end{proof}

\begin{lem}\label{zariskiclosed}
	For any homomorphism $\phi \in S $,  the image $\phi^{\mathrm{an}}(X') \subset (\G_m^{d+1})^{\mathrm{an}}$ is a Zariski closed analytic subvariety.
\end{lem}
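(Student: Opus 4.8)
The plan is to show that the restriction $\phi^{\mathrm{an}}|_{X'} \colon X' \to (\G_m^{d+1})^{\mathrm{an}}$ is a \emph{proper} morphism of Berkovich analytic spaces, and then to conclude by the proper mapping theorem that its image is a Zariski closed analytic subvariety. By Lemma \ref{bd} the morphism $\phi^{\mathrm{an}}|_{X'}$ is separated and has empty relative boundary, so it remains only to verify that it is topologically proper, i.e.\ that preimages of compact sets are compact. The whole argument turns on transferring properness from the tropical side by means of the commutative square $\Trop(\phi^{\mathrm{an}}) \circ \Trop = \Trop \circ \phi^{\mathrm{an}}$.

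First I would show that the $\R$-linear map $\Trop(\phi^{\mathrm{an}}) \colon \R^n \to \R^{d+1}$ restricts to a proper map on $\Trop(X')$. By Gubler's theorem together with the hypothesis, $\Trop(X') = \Trop(X) \cap (N_{\sigma_X})_\R$ is a finite union of $d$-dimensional polyhedra $P_1, \dots, P_s$, and by the defining property of $\phi \in S$ the map $\Trop(\phi^{\mathrm{an}})$ is injective on each $P_i$. Since $P_i$ has nonempty relative interior in its $d$-dimensional affine hull $A_i$, injectivity on $P_i$ forces injectivity of the linear map on $A_i$; an injective affine map from a $d$-dimensional affine space into $\R^{d+1}$ is a closed embedding, hence proper, and $P_i$ is closed in $A_i$, so $\Trop(\phi^{\mathrm{an}})|_{P_i}$ is proper. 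As there are finitely many $P_i$, the restriction $\Trop(\phi^{\mathrm{an}})|_{\Trop(X')}$ is proper.

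Now, given a compact set $C \subset (\G_m^{d+1})^{\mathrm{an}}$, its image $\Trop(C) \subset \R^{d+1}$ is compact, and by the commutative square the preimage $(\phi^{\mathrm{an}}|_{X'})^{-1}(C)$ is carried by $\Trop$ into the set $\Trop(\phi^{\mathrm{an}})^{-1}(\Trop(C)) \cap \Trop(X')$, which is compact by the previous paragraph. Since the tropicalization map $\Trop \colon O(\sigma_X)^{\mathrm{an}} \to \R^n$ is proper and $X'$ is Zariski closed, hence closed, in $O(\sigma_X)^{\mathrm{an}}$, the restriction $\Trop|_{X'}$ is proper; thus the preimage under $\Trop|_{X'}$ of that compact set is compact, and $(\phi^{\mathrm{an}}|_{X'})^{-1}(C)$, being a closed subset of it, is itself compact. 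This establishes topological properness, so $\phi^{\mathrm{an}}|_{X'}$ is proper in the sense of Berkovich. Finally, the proper mapping theorem for Berkovich analytic spaces ensures that the image of $\phi^{\mathrm{an}}|_{X'}$, with its reduced structure, is cut out by a coherent ideal sheaf and is therefore a Zariski closed analytic subvariety of $(\G_m^{d+1})^{\mathrm{an}}$, as claimed.

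The main obstacle I expect is the second step: turning the purely combinatorial injectivity condition defining $S$ into honest topological properness of the linear map on $\Trop(X')$, and then propagating properness through the tropicalization square to $\phi^{\mathrm{an}}|_{X'}$. A secondary point requiring care is the invocation of the Berkovich proper mapping theorem, which is what allows one to pass from mere topological closedness of the image to its being a Zariski closed analytic subvariety.
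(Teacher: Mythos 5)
Your proposal is correct and follows essentially the same route as the paper: injectivity of $\Trop(\phi^{\mathrm{an}})$ on the $d$-dimensional polyhedra of $\Trop(X')$ gives properness on the tropical side, the commutative square and properness of $\Trop$ transfer this to topological properness of $\phi^{\mathrm{an}}|_{X'}$, and Lemma \ref{bd} plus Berkovich's proper mapping theorem finish the argument. The only difference is cosmetic: the paper phrases the compactness check in terms of boundedness of tropicalizations of affinoid domains, while you work directly with compact sets.
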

\begin{proof}
	For each $d$-dimensional polyhedron $P$ contained in  $\Trop(X') $, 
	the $\R$-linear map 
	$$\Trop(\phi^{\mathrm{an}}) |_P \colon P \rightarrow \R^{d+1} $$
	 is injective.
	Hence  $\Trop(\phi^{\mathrm{an}}) |_P$ induces a homeomorphism from $P$ onto $\Trop(\phi^{\mathrm{an}})(P)$.
	
	For any  affinoid domain $U $ of $(\G_m^{d+1})^{\mathrm{an}}$,
	since  $\Trop(U)$ is bounded, the intersection $$ \Trop(U) \cap\Trop(\phi^{\mathrm{an}})(P) $$
	is bounded.
	Hence the intersection $$ \Trop(\phi^{\mathrm{an}})^{-1}(\Trop(U)) \cap P$$
	is bounded.
	Since $\Trop(X') $ is a finite union of $d$-dimensional polyhedra, 
	the intersection $$ \Trop(\phi^{\mathrm{an}})^{-1}(\Trop(U))\cap \Trop(X') $$
	is bounded.
	Hence 
	\begin{align*}
	\Trop((\phi^{\mathrm{an}})^{-1}(U) \cap X')&\subset \Trop((\phi^{\mathrm{an}})^{-1}(U))\cap \Trop(X')\\ &\subset \Trop(\phi^{\mathrm{an}})^{-1}(\Trop(U))\cap \Trop(X')
	\end{align*}  is bounded. Thus $\Trop((\phi^{\mathrm{an}})^{-1}(U) \cap X')$ is compact.

By \cite[Lemma 2.1]{Pay09-1},  the tropicalization map 
$$\Trop \colon (\G_m^n)^{\mathrm{an}} \rightarrow \R^n$$
is proper.
	Hence the closed subset $$(\phi^{\mathrm{an}})^{-1}(U) \cap X' \subset \Trop^{-1}(\Trop((\phi^{\mathrm{an}})^{-1}(U) \cap X')) $$ is compact.
	It follows that  for any compact subset $C\subset (\G_m^{d+1})^{\mathrm{an}}$, the subset $(\phi^{\mathrm{an}})^{-1}(C) \cap X' $  is compact.
	By Lemma \ref{bd}, the morphism $\phi^{\mathrm{an}}|_{X'}\colon X' \rightarrow (\G_m^{d+1})^{\mathrm{an}}$ is separated. Hence the  map of underlying topological spaces $ |X'| \rightarrow |(\G_m^{d+1})^{\mathrm{an}}|$ induced by $\phi^{\mathrm{an}}|_{X'}$ is proper; see \cite[Section 3.1, p.50]{Ber90}.
	Moreover,  the relative boundary of $\phi^{\mathrm{an}}|_{X'}$ is empty by Lemma \ref{bd}.
	Hence  $ \phi^{\mathrm{an}}|_{X'} $ is a proper morphism of Berkovich analytic spaces; see \cite[Section 3.1, p.50]{Ber90} for the definition of proper morphisms.
	By \cite[Proposition 3.3.6]{Ber90}, the image $\phi^{\mathrm{an}}(X') \subset (\G_m^{d+1})^{\mathrm{an}}$ is a Zariski closed analytic subvariety.
\end{proof}

By Lemma \ref{zariskiclosed}, for each $\phi \in S$, we consider $\phi^{\mathrm{an}}(X') $ as a Zariski closed analytic subvariety of $ (\G_m^{d+1})^{\mathrm{an}}$.
 Since $X'$ is irreducible, its image  $\phi^{\mathrm{an}}(X') $ is also irreducible.
 Since $$\Trop(\phi^{\mathrm{an}}(X') )=\Trop(\phi^{\mathrm{an}})(\Trop(X'))$$ and $\phi \in S$, the tropicalization $\Trop(\phi^{\mathrm{an}}(X') )$ is a finite union of $d$-dimensional polyhedra.
 By \cite[Theorem 1.1]{Gub07}, the irreducible Zariski closed analytic subvariety $\phi^{\mathrm{an}}(X')\subset (\G_m^{d+1})^{\mathrm{an}}$ is  $d$-dimensional.
 By Remark \ref{codim1}, it is an analytic hypersurface in $(\G_m^{d+1})^{\mathrm{an}}$.
Hence, by Theorem \ref{hyper}, it is the analytification of an algebraic subvariety of $\G_m^{d+1}$.

 Therefore, for each $\phi \in S$, the Zariski closed analytic subvariety $$W_\phi :=(\phi^{\mathrm{an}})^{-1}(\phi^{\mathrm{an}}(X')) \subset (\G_m^n)^{\mathrm{an}}$$  is algebraic.
We put $$W:= \bigcap_{\phi \in S} W_\phi .$$
Then $W$ contains $X'$.
 Since  $W_\phi \subset (\G_m^n)^{\mathrm{an}}$ is algebraic for every $\phi \in S$, the intersection $W$ is  algebraic.
 One can  deduce that  $X'\subset (\G_m^n)^{\mathrm{an}}$ is algebraic from the following:

\begin{lem}\label{dim}
 The dimension of	$W$ is less than  or equal to $d=\dim X'=\dim X$.
\end{lem}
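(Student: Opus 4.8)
The plan is to pass to tropicalizations and run a dimension count on the Grassmannian $\Gr(n-d-1,n)$, arguing by contradiction. Since each $W_\phi$ is algebraic, so is the intersection $W=\bigcap_{\phi\in S}W_\phi$; writing $W=Z^{\mathrm{an}}$ for a Zariski closed algebraic subvariety $Z\subseteq\G_m^n$, the tropicalization $\Trop(W)=\Trop(Z)$ is a finite union of $\Gamma$-rational polyhedra and satisfies $\dim\Trop(W)=\dim W$ by the theorem of Bieri--Groves \cite[Theorem A]{BG84} (equivalently \cite[Theorem 3.3.5]{MS15}). Hence it suffices to bound $\dim\Trop(W)$, and I would assume for contradiction that $e:=\dim\Trop(W)\geq d+1$.

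The second step is to extract linear-algebraic data from a top-dimensional cell. I would choose a polyhedron $P\subseteq\Trop(W)$ of dimension $e$ and let $L\subseteq\R^n$ be its direction space; since $P$ is $\Gamma$-rational, $L$ is $\Q$-rational of dimension $e$, and I would fix once and for all a $\Q$-rational subspace $L'\subseteq L$ of dimension \emph{exactly} $d+1$. For $\phi\in S$ put $A_\phi:=\Trop(\phi^{\mathrm{an}})\colon\R^n\to\R^{d+1}$, an integral (hence $\Q$-rational) linear map whose kernel is $\Q$-rational of dimension $n-d-1$. Using $W\subseteq W_\phi$ together with $\Trop\circ\phi^{\mathrm{an}}=A_\phi\circ\Trop$ and the algebraicity of $\phi^{\mathrm{an}}(X')$, I would show that every $w\in\Trop(W_\phi)$ satisfies $A_\phi(w)\in\Trop(\phi^{\mathrm{an}}(X'))$, so that $A_\phi(P)\subseteq\Trop(\phi^{\mathrm{an}}(X'))=A_\phi(\Trop(X'))$. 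As $\phi\in S$, the right-hand side is a finite union of $d$-dimensional polyhedra; since $A_\phi(P)$ is a polyhedron with direction space $A_\phi(L)$, this gives $\dim A_\phi(L)=\dim A_\phi(P)\leq d$, and a fortiori $\dim A_\phi(L')\leq d$. Therefore
$$\dim\bigl(\ker A_\phi\cap L'\bigr)=\dim L'-\dim A_\phi(L')\geq (d+1)-d=1,$$
so $\ker A_\phi$ meets $L'$ nontrivially for \emph{every} $\phi\in S$.

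The third step is the contradiction on the Grassmannian. Because $\ker A_\phi$ and $L'$ are both $\Q$-rational, the condition $\ker A_\phi\cap L'\neq\{0\}$ over $\R$ is equivalent to $(\ker A_\phi\cap\Q^n)\cap(L'\cap\Q^n)\neq\{0\}$ over $\Q$, so each $\ker A_\phi\cap\Q^n$ lies in the Schubert locus $\Xi:=\{\Lambda\in\Gr(n-d-1,n)\mid\Lambda\cap L'\neq 0\}$. The crucial point is that $\dim L'=d+1$ was chosen so that $(n-d-1)+(d+1)=n$: subspaces of these complementary dimensions generically meet only at the origin, so $\Xi$ is a \emph{proper} Zariski closed subset of $\Gr(n-d-1,n)$ defined over $\Q$. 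But the set $\{\ker A_\phi\cap\Q^n\mid\phi\in S\}$ was shown, just before this lemma via \cite[Theorem 5.2.3]{LB15}, to be Zariski dense in $\Gr(n-d-1,n)(\Q)$, and a Zariski dense set cannot be contained in $\Xi$. This contradiction forces $e\leq d$, i.e. $\dim W\leq d$.

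The main obstacle is the genericity bookkeeping: one must cut $L$ down to the auxiliary subspace $L'$ of dimension \emph{exactly} $d+1$, since only then is ``meeting $L'$'' a proper Zariski closed condition on $\Gr(n-d-1,n)$; if one worked with all of $L$ (of dimension $e\geq d+1$), an $(n-d-1)$-plane would generically meet $L$ and the density hypothesis would yield no contradiction. A secondary technical point is maintaining rationality throughout (of $L$, of $L'$, and of $\ker A_\phi$), so that the real linear-algebra intersection statement transfers to the $\Q$-points of the Grassmannian where the density hypothesis is stated.
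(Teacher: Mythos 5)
Your proof is correct and follows essentially the same strategy as the paper: take a $(d+1)$-dimensional polyhedron in the tropicalization of $W$, note that every $\phi\in S$ sends $\Trop(W_\phi)$ into the $d$-dimensional set $\Trop(\phi^{\mathrm{an}})(\Trop(X'))$, and derive a contradiction from the Zariski density of $\{\ker\Trop(\phi^{\mathrm{an}})\}$ in $\Gr(n-d-1,n)(\Q)$. The only (harmless) difference is at the final step: the paper invokes \cite[Theorem 5.2.3]{LB15} a second time to produce a $\phi\in S$ whose tropicalization is injective on $P$, whereas you argue contrapositively that all of $S$ would land in the proper Schubert locus $\Xi$ of planes meeting your auxiliary $(d+1)$-dimensional subspace $L'$, which is a self-contained linear-algebra substitute for that citation.
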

\begin{proof}
	Assume that there exists an irreducible component $V$ of $W$ of dimension greater than $d$.
	By \cite[Theorem 1.1]{Gub07},  the tropicalization $\Trop(V) $ is a locally finite union of polyhedra of dimension greater than $d$.
	We take a polyhedron $P$ of dimension $d+1$ contained in $\Trop(V) $.
	
	Let $S'$ be  the set  of surjective homomorphisms $$\phi \colon \G_m^n \rightarrow \G_m^{d+1}$$ such that the $\R$-linear map $\Trop(\phi^{\mathrm{an}})\colon \R^n \rightarrow \R^{d+1}$ is injective on $P$.
	By  \cite[Theorem 5.2.3]{LB15}, the subset $$\{ \, \ker (\Trop(\phi^{\mathrm{an}}))\cap \Q^n  \mid  \phi \in S' \, \} $$  is dense Zariski open in $\Gr(n-d-1,n)(\Q)$.
	 Since $S$ is also Zariski  open dense in  $\Gr(n-d-1,n)(\Q)$, the intersection $S \cap S'$ is non-empty.
	 
	 We take an element $\phi \in S \cap S'$.
	 Since $\phi^{\mathrm{an}}(W_\phi) = \phi^{\mathrm{an}}(X') $, 
	 	we have $$\Trop(\phi^{\mathrm{an}})(\Trop(W_\phi))= \Trop(\phi^{\mathrm{an}}(W_\phi)) = \Trop(\phi^{\mathrm{an}}(X'))=\Trop(\phi^{\mathrm{an}})(\Trop(X')) .$$
	 Since $\phi \in S$, the image $\Trop(\phi^{\mathrm{an}})(\Trop(W_\phi))$ is a finite union of  $d$-dimensional polyhedra. 
	 	Since $\phi \in S'$, the polyhedron $\Trop(\phi^{\mathrm{an}})(P)$ is $(d+1 )$-dimensional.
	 	 Hence we have $$\Trop(\phi^{\mathrm{an}})(P)\not\subset\Trop(\phi^{\mathrm{an}})(\Trop(W_\phi)).$$
	 	 Since $P \subset \Trop(V)$,
we have $$\Trop(\phi^{\mathrm{an}})(\Trop(V)) \not\subset \Trop(\phi^{\mathrm{an}})(\Trop(W_\phi)),$$
 which contradicts $V \subset W \subset W_\phi$.
 
Consequently, the dimension of	$W$ is less than  or equal to $d$.
\end{proof}

\begin{proof}[Proof of Theorem \ref{main}]
	
	Recall that $X':=X \cap O (\sigma_X)^{\mathrm{an}}$ and $d:=\dim X'= \dim X$. 
	By Lemma \ref{dim}, the dimension of $W$ is less than  or equal to $d$.
	Since $X'\subset W$, the analytic subvariety  $X'$ is an irreducible component of $W$. 
	Since $W$ is algebraic, by \cite[Proposition 2.7.16]{Duc17}, the Zariski closed analytic subvariety $X'\subset O (\sigma_X)^{\mathrm{an}}$ is algebraic.
	Therefore, by Lemma \ref{orbit}, the analytic subvariety $X\subset Y_{\Sigma}^{\mathrm{an}}$ is algebraic.
	
	The proof of Theorem \ref{main} is complete.
	\end{proof}

 \section{Examples of the tropicalizations of algebraic and analytic  hypersurfaces}
 
 In this section, we give two examples of  the tropicalizations of   hypersurfaces in the $2$-dimensional torus $(\G_m^2)^{\mathrm{an}}$.
 The first example is analytic and not algebraic. It is a locally finite union of polyhedra, but the number of polyhedra is infinite.
 The second example is algebraic, and it is a finite union of  polyhedra.
 
  Let $$f = \sum_{(i,j)\in \Z_{\geq 0}^2 } a_{i,j} X^i Y^j\in  \O ((\G_m^2)^{\mathrm{an}})  \setminus \{0\} \qquad (a_{i,j} \in K )$$ 
 be a non-zero analytic function on $(\G_m^2)^{\mathrm{an}}=(\Spec K[X^{\pm},Y^{\pm}])^{\mathrm{an}}$
 satisfying $$\val(a_{i,j})= i^2+j^2+ij-i-j$$ for any $(i,j)\in \Z_{\geq 0}^2 $.
 For each $(s,t)\in \Z_{\geq 0}^2 $, we put  
 $$f_{s,t} := \sum_{\substack{0\leq i \leq s\\ 0\leq j \leq t}} a_{i,j} X^i Y^j\in K[X,Y]\setminus \{0\} .$$ 
 
 First, we consider the tropicalization $\Trop(V(f)) \subset \R^2$.
 Since $f$ has infinitely many non-zero terms, the analytic hypersurface $V(f)\subset (\G_m^2)^{\mathrm{an}}$ is not algebraic. The tropicalization $\Trop(V(f)) \subset \R^2$ is not a finite union of  polyhedra; see \textsc{Figure.1}.
 
 \begin{center}
 	\includegraphics{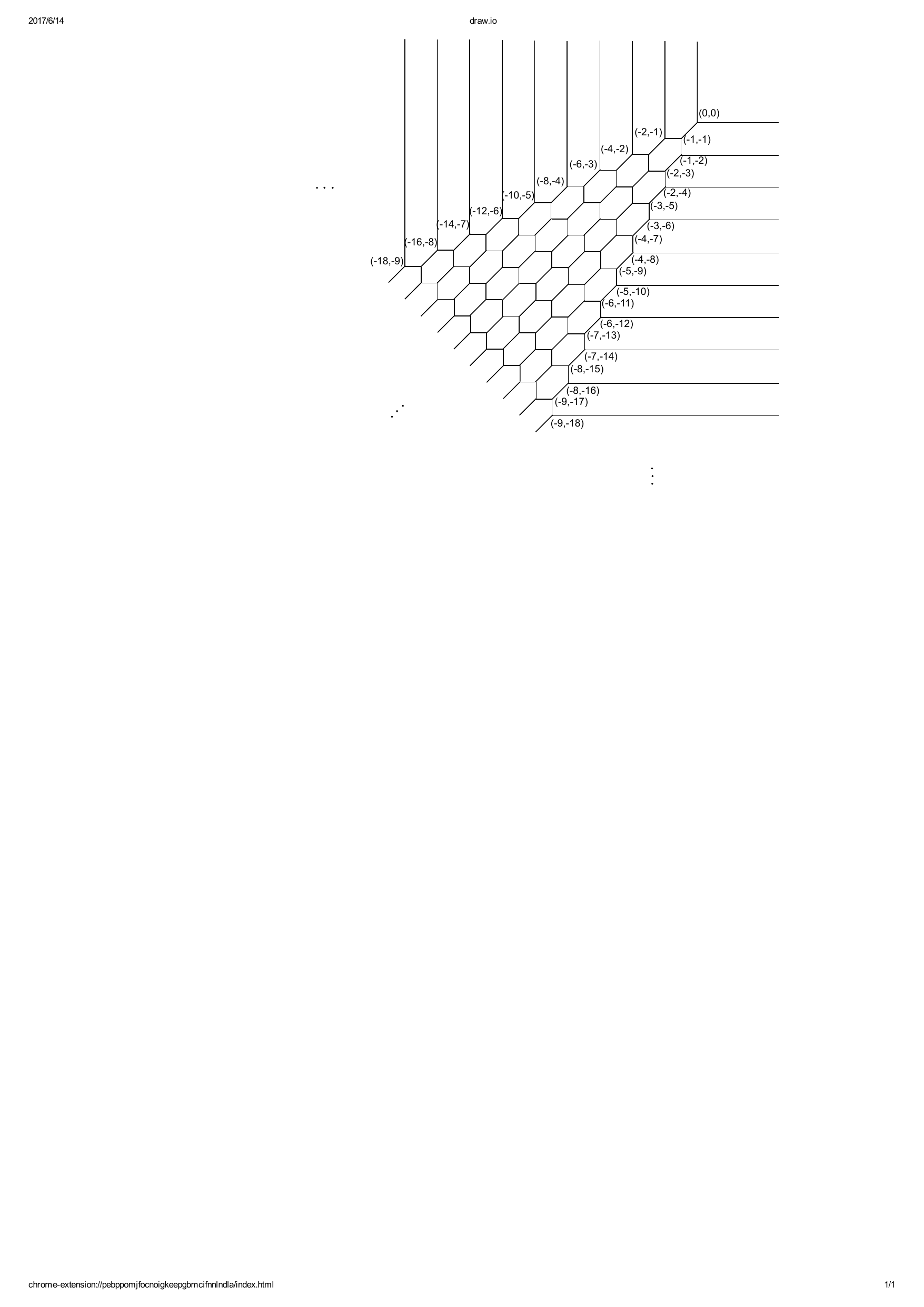}
 	
 	\textsc{Figure.1}.\ The tropicalization of the analytic hypersurface  $V(f)\subset (\G_m^2)^{\mathrm{an}}$.
 \end{center}
 
 Next, we consider the tropicalization $\Trop(V(f_{s,t})) \subset \R^2$ for each $(s,t)\in \Z_{\geq 0}^2 $.
 Since $f_{s,t}$ is a polynomial, the analytic hypersurface $V(f_{s,t})\subset (\G_m^2)^{\mathrm{an}}$ is algebraic. The tropicalization $\Trop(V(f_{s,t})) \subset \R^2$ is a finite union of polyhedra; see Figure.2.
  
  \begin{center}
	\includegraphics[width=15cm]{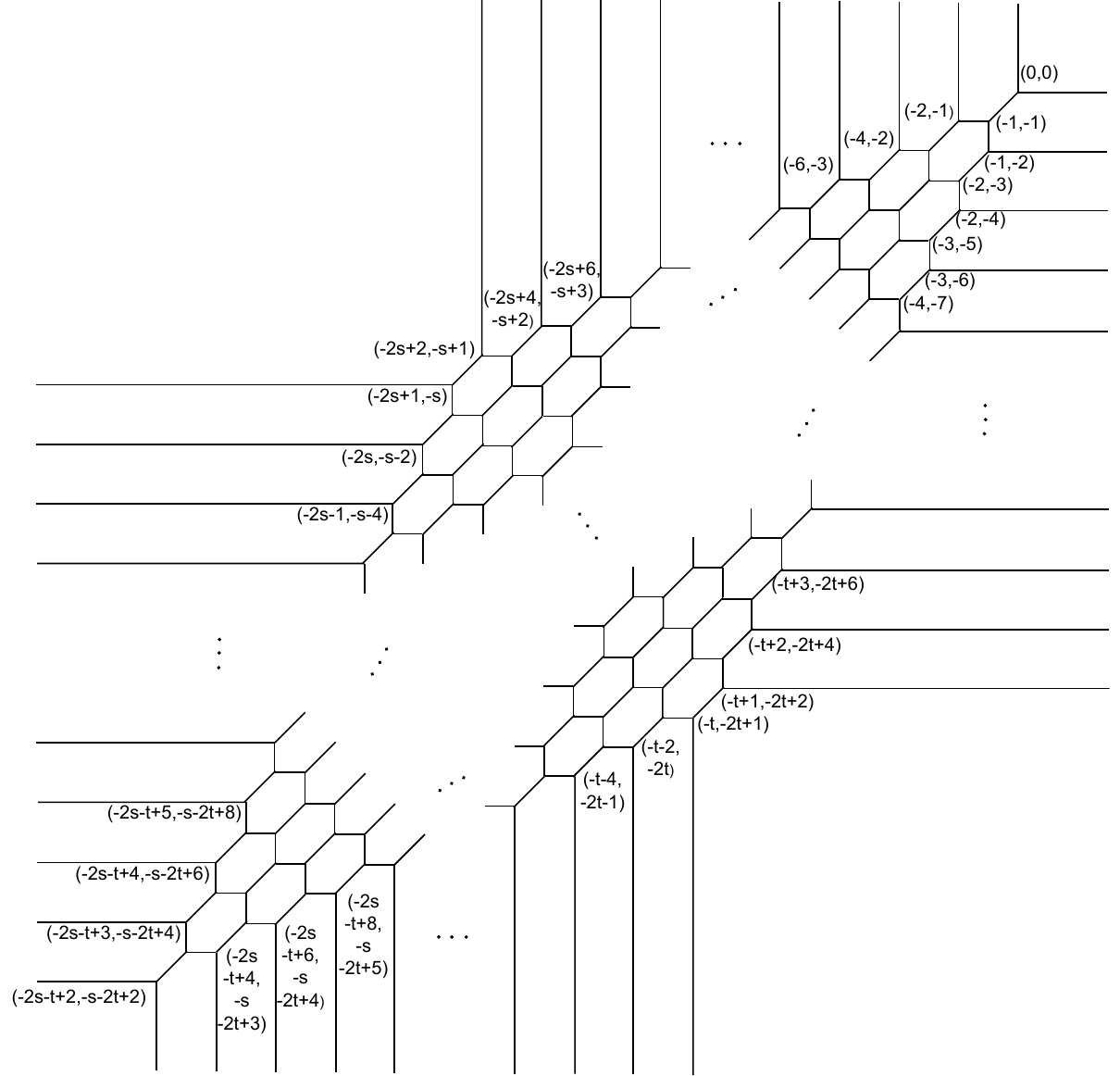}
	
    	\textsc{Figure.2}.\ The tropicalization of the algebraic hypersurface  $V(f_{s,t})\subset (\G_m^2)^{\mathrm{an}}$.
    
    	\end{center}

\subsection*{Acknowledgements}
The author would like to express his sincere thanks to his adviser Tetsushi Ito for  invaluable advice and persistent support.
He is deeply grateful to Teruhisa Koshikawa for pointing out the insufficency of the argument for irreducibility and teaching him the refference of it. This work was supported by JSPS KAKENHI Grant Number 18J21577.


\begin{thebibliography}{99}
	
	
				\bibitem[Ber90]{Ber90}
		Berkovich, V. G., \textit{Spectral theory and analytic geometry over non-Archimedean fields}, Mathematical Surveys and Monographs, 33.\ American Mathematical Society, Providence, RI, 1990.
		
		\bibitem[Ber93]{Ber93}
		Berkovich, V. G., \textit{\'{E}tale cohomology for non-Archimedean analytic spaces}, Inst.\ Hautes \'{E}tudes Sci.\ Publ.\ Math.\ No.\ \textbf{78} (1993), 5-161.
	
	
	
		\bibitem[BG84]{BG84}
		Bieri, R, Groves, J. R. J., \textit{The geometry of the set of characters induced by valuations}, J.\ Reine Angew.\ Math.\ \textbf{347} (1984), 168-195.
		
%
		\bibitem[CLS11]{CLS11}
		 Cox, D. A., Little, J. B., Schenck, H. K., \textit{Toric varieties}, Graduate Studies in Mathematics, 124.\ American Mathematical Society, Providence, RI, 2011.
		 
		 \bibitem[Duc17]{Duc17}
		  Ducros, A., \textit{Families of Berkovich spaces}, preprint, arXiv:1107.4259v6, to appear as a memoir in the Ast\'erisque collection
		
		\bibitem[EKL06]{EKL06}
		Einsiedler, M., Kapranov, M., Lind, D., \textit{Non-Archimedean amoebas and tropical varieties}, J.\ Reine Angew.\ Math.\ \textbf{601} (2006), 139-157. 
		
		\bibitem[Gub07]{Gub07}
		 Gubler, W., \textit{Tropical varieties for non-Archimedean analytic spaces}, Invent.\ Math.\ \textbf{169} (2007), no.\ 2, 321-376.
	
		\bibitem[Gub13]{Gub13}
		 Gubler, W., \textit{A guide to tropicalizations}, Algebraic and combinatorial aspects of tropical geometry, 125-189, Contemp.\ Math., 589, Amer.\ Math.\ Soc., Providence, RI, 2013. 
		 	
		 	\bibitem[LB15]{LB15}
		 Lakshmibai, V., Brown, J., \textit{The Grassmannian variety. Geometric and representation-theoretic aspects}, Developments in Mathematics, 42.\ Springer, New York, 2015.
		 
			\bibitem[Lut16]{Lut16} 
		L\"utkebohmert, W., \textit{Rigid geometry of curves and their Jacobians},
		Ergebnisse der Mathematik und ihrer Grenzgebiete.\ 3.\ Folge.\ A Series of Modern Surveys in Mathematics,
		\textbf{61}.\ Springer, Cham, 2016.
	
	
	\bibitem[Mar15]{Mar15}
	Martin, F., \textit{A note on tropicalization in the context of Berkovich spaces}, Israel J.\ Math.\ \textbf{210} (2015), no.\ 1, 323–334. 
	
			\bibitem[MNN14]{MNN14}
		Madani, F.,  Nisse, L., Nisse, M., \textit{A tropical characterization of complex analytic varieties to be algebraic}, preprint, arXiv:1407.6459
		
			\bibitem[MS15]{MS15}
		Maclagan, D., Sturmfels, B., \textit{Introduction to tropical geometry}, Graduate Studies in Mathematics, 161.\ Amer.\ Math.\ Soc., Providence, RI, 2015.
		
		
		\bibitem[Pay09-1]{Pay09-1}
		Payne, S., \textit{Analytification is the limit of all tropicalizations}, Math.\ Res.\ Lett.\ \textbf{16} (2009), no.\ 3, 543-556.
	
	\bibitem[Pay09-2]{Pay09-2}
		Payne, S., \textit{Fibers of tropicalization}, Math.\ Z.\ \textbf{262} (2009), no. 2, 301-311. 
		
		\bibitem[Rab12]{Rab12}
		Rabinoff, J., \textit{Tropical analytic geometry, Newton polygons, and tropical intersections}, Adv.\ Math.\ \textbf{229} (2012), no.\ 6, 3192-3255.
		
		\bibitem[SS04]{SS04}
	 Speyer, D., Sturmfels, B., \textit{The tropical Grassmannian}, Adv.\ Geom.\ \textbf{4} (2004), no.\ 3, 389-411.
		
	
	\bibitem[Van75]{Van75}
	 van der Put, M., \textit{Rigid analytic spaces}, Groupe d'\'Etude d'Analyse Ultram\'etrique (3e ann\'ee: 1975/76), Fasc.\ 2 (Marseille-Luminy, 1976), Exp.\ No.\ J7, 20 pp.\ Secr\'etariat Math., Paris, 1977.
		


\end{thebibliography}
\end{document}